\newcounter{commentcounter}
\renewcommand*{\backref}[1]{}
\renewcommand*{\backrefalt}[4]
{
    \ifcase #1
        No citation in the text.
    \or
        Cited on Page #2.
    \else
        Cited on Pages #2.
    \fi
}
\newtheorem{thm}{Theorem}[section]
\newtheorem{lemma}[thm]{Lemma}
\newtheorem{prop}[thm]{Proposition}
\theoremstyle{definition}
\newtheorem{remark}[thm]{Remark}
\theoremstyle{plain}
    \newtheoremstyle{TheoremNum}
        {\topsep}{\topsep} 
        {\itshape} 
        {-0.25cm} 
        {\bfseries} 
        {.} 
        { }  
        {\thmname{#1}\thmnote{ \bfseries #3}}
    \theoremstyle{TheoremNum}
\newcommand*{\claimproofname}{My proof}
\DeclareMathOperator{\tr}{\mathrm{tr}}
\DeclareMathOperator{\sym}{Sym}
\newcommand{\GL}{\mathrm{GL}}
\def\Z{\mathbb{Z}}
\newcommand{\QQ}{\mathbb{Q}}
\tikzstyle{blackNode}=[fill=black, draw=black, shape=circle]
\newcommand{\atA}{\tilde{\mathtt{A}}} 
\newcommand{\atX}{\widetilde{\mathtt{X}}}
\newcommand{\aX}{\mathtt{X}}
\newcommand{\aB}{\mathtt{B}}
\newcommand{\aC}{\mathtt{C}}
\newcommand{\atB}{\widetilde{\mathtt{B}}}
\newcommand{\atC}{\widetilde{\mathtt{C}}}
\newcommand{\atD}{\widetilde{\mathtt{D}}}
\newcommand{\atE}{\widetilde{\mathtt{E}}}
\newcommand{\atF}{\widetilde{\mathtt{F}}_{4}}
\newcommand{\atG}{\widetilde{\mathtt{G}}_{2}}
\newcommand{\atI}{\widetilde{\mathtt{I}}_{1}}
\title{Profinite rigidity of affine Coxeter groups}
\author{Samuel M. Corson, Sam Hughes, Philip Möller, and Olga Varghese}
\date{\today}
\address{Samuel M. Corson\\
Matematika Saila\\
UPV/EHU\\
Sarriena s/n\\
48940 Leioa--Bizkaia (Spain)
}
\email{sammyc973@gmail.com}
\address{Sam Hughes\\
Mathematical Institute\\
Andrew Wiles Building\\ 
Observatory Quarter\\ 
University of Oxford\\ 
Oxford\\ 
OX2 6GG (United Kingdom)}
\email{sam.hughes@maths.ox.ac.uk}
\address{Philip M\"oller\\
Department of Mathematics\\
University of M\"unster\\ 
Einsteinstra\ss e 62\\
48149 M\"unster (Germany)}
\email{philip.moeller@uni-muenster.de}
\address{Olga Varghese\\ Institute of Mathematics, Heinrich-Heine-University Düsseldorf, Universitätsstra{\upshape{\ss}}e 1, 40225, Düsseldorf (Germany)}
\email{olga.varghese@hhu.de}
\keywords{Affine Coxeter groups, crystallographic groups, profinite rigidity}
\subjclass[2020]{20F55, 20H15, 20E18}
\begin{document}
	
\pagenumbering{arabic}
	
	\begin{abstract}
	We prove that affine Coxeter groups are profinitely rigid.
\end{abstract}

\maketitle

\section{Introduction}
For a group $G$  we denote by $\mathcal{F}(G)$ the set of isomorphism classes of finite quotients of $G$. 
A group $G$ is called \emph{profinitely rigid relative to a class of groups $\mathcal{C}$} if $G\in \mathcal{C}$ and for any group $H$ in the class $\mathcal{C}$ whenever $\mathcal{F}(G)=\mathcal{F}(H)$, then $G\cong H$. A finitely generated residually finite group $G$ is called \emph{profinitely rigid} if $G$ is profinitely rigid among all finitely generated residually finite groups. 

\begin{thm}
\label{IrrAffineCoxeterProfinitelyRigid}
Affine Coxeter groups are profinitely rigid.
\end{thm}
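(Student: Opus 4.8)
The plan is to exploit the fact that an affine Coxeter group $W$ is a crystallographic group. Writing $W_0$ for the associated finite Weyl group (the point group) and $L\cong\ZZ^{n}$ for the coroot lattice, there is a short exact sequence
\[
1 \longrightarrow L \longrightarrow W \longrightarrow W_0 \longrightarrow 1,
\]
in which $W_0$ acts faithfully on $L$ through its reflection representation $W_0\hookrightarrow\GL_n(\ZZ)$, and this representation has no nonzero fixed vectors. Since both the translation lattice $L=\bigoplus_i L^{(i)}$ and the point group $W_0=\prod_i W_0^{(i)}$ are assembled from the irreducible components, and since the profinite completion of a direct product is the product of the completions, I would first record the reducible case and then concentrate on recovering the crystallographic triple $(W_0,L,[W])$, where $[W]\in H^{2}(W_0;L)$ is the extension class, from $\widehat{W}$. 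The theorem then becomes the assertion that this datum is a profinite invariant.

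Let $H$ be a finitely generated residually finite group with $\widehat H\cong\widehat W$. First I would show that $H$ is itself crystallographic of dimension $n$ with point group $W_0$. The sequence above completes to $1\to\widehat{\ZZ}^{n}\to\widehat W\to W_0\to1$, so $\widehat H\cong\widehat W$ contains an open subgroup isomorphic to $\widehat{\ZZ}^{n}$; by the open/finite-index correspondence this is the completion of a finite-index subgroup $H_0\le H$ with $\widehat{H_0}\cong\widehat{\ZZ}^{n}$. As $\ZZ^{n}$ is profinitely rigid (a residually finite group with abelian completion is abelian, and the rank and torsion of a finitely generated abelian group are read off from its completion), we get $H_0\cong\ZZ^{n}$, whence $H$ is virtually $\ZZ^{n}$. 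Next I would check that $\widehat W$ has no nontrivial finite normal subgroup: such a subgroup meets the torsion-free $\widehat{\ZZ}^{n}$ trivially, hence embeds into $W_0$ and is forced to centralize $\widehat{\ZZ}^{n}$, contradicting faithfulness of the reflection representation unless it is trivial. Thus $H$ has no finite normal subgroup and is genuinely crystallographic. Finally, the closure of the translation lattice, being the unique maximal finite-index torsion-free abelian normal subgroup, is characteristic; so the isomorphism carries one copy of $\widehat{\ZZ}^{n}$ onto the other and induces an isomorphism of point groups $P_H\cong W_0$.

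With $P:=W_0$ fixed, the comparison now takes place entirely among crystallographic extensions of $P$. The isomorphism $\widehat H\cong\widehat W$ intertwines the conjugation actions, so the $\widehat{\ZZ}[P]$-modules $\widehat L=L\otimes\widehat{\ZZ}$ and $\widehat M=M\otimes\widehat{\ZZ}$ (where $M$ is the lattice of $H$) are isomorphic; equivalently, $L$ and $M$ lie in the same genus as $\ZZ[P]$-lattices. For the extension classes, observe that $P$ is finite, so $H^{2}(P;L)$ is a finite abelian group, and tensoring a finite abelian group with $\widehat{\ZZ}$ changes nothing; hence the completion map $H^{2}(P;L)\to H^{2}(P;\widehat L)$ is an isomorphism, and the class $[W]$ is faithfully recorded by $\widehat W$. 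Consequently, once the module is pinned down, the matching of profinite extension classes forces the matching of the integral ones, and $H\cong W$ follows.

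The crux—and the step I expect to be the main obstacle—is therefore a purely integral statement: the coroot lattice $L$, viewed as a $\ZZ[W_0]$-module through the reflection representation, has trivial genus, so that any $\ZZ[W_0]$-lattice locally isomorphic to $L$ at every prime is globally isomorphic to $L$. This is exactly the property distinguishing affine Coxeter groups from general crystallographic groups, the latter failing to be profinitely rigid precisely because of nontrivial lattice genera (the cohomological ambiguity, as just seen, contributes nothing). I would establish it by exploiting the rigidity of reflection representations—reducing to the irreducible affine diagrams, using that the endomorphism ring of $L$ is as small as possible and that the root system controls $L$ tightly, with the infinite dihedral case $\atA_{1}$ (where $P=\ZZ/2$ acts by $-1$) as the model and the classical and exceptional types treated by the same genus-triviality input. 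Combined with the cohomological computation and the reduction to irreducible components, this yields $H\cong W$ and hence the theorem.
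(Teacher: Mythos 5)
Your overall framework---recover from $\widehat W$ the crystallographic triple consisting of the point group $W_0$, the genus of the translation lattice, and the extension class---is sound, and the early steps are essentially correct: the recovery of crystallographicity and of the point group, the identification of $\widehat L\cong\widehat M$ as $\widehat{\ZZ}[W_0]$-modules, and the observation that $H^2(W_0;L)\to H^2(W_0;\widehat L)$ is an isomorphism (which, since affine Coxeter groups are symmorphic, i.e.\ the class of $W$ is zero, really does eliminate the cohomological ambiguity; note that for nonzero classes your phrasing would have an additional gap, since the profinite isomorphism only matches classes up to the action of $\Aut_{\widehat{\ZZ}[W_0]}(\widehat L)$, which is larger than $\Aut_{\ZZ[W_0]}(L)$). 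The genuine gap is exactly at the step you yourself flag as the crux: the claim that the coroot lattice has trivial genus as a $\ZZ[W_0]$-lattice is asserted but never proved, and the sketch offered for it is circular (``the classical and exceptional types treated by the same genus-triviality input'' is the statement to be proved, not an argument for it). None of the principles you invoke implies it: smallness of the endomorphism ring does not control the genus---Brigham's non-rigid examples $\ZZ^{p-1}\rtimes\Z_p$ \cite{Brigham1971} have absolutely irreducible point-group action, hence endomorphism ring $\ZZ$, yet non-isomorphic lattices in one genus---so some input special to Weyl groups is unavoidable. Since, given your other steps, genus triviality of the coroot lattice is essentially equivalent to the theorem, as written the proposal assumes its conclusion.

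For comparison, the paper closes precisely this gap with two concrete inputs. First, Feit's theorem \cite{Feit1998} shows that any $W_{\aX_n}$-invariant lattice in the reflection representation lies (up to rescaling) between the root lattice $Q(\aX_n)$ and the weight lattice $P(\aX_n)$, so the candidates for $H$ form an explicit finite list of groups $L\rtimes W_{\aX_n}$. Second, these finitely many candidates are separated by finite quotients: by abelianizations in types $\atA_n,\atD_n,\atE_6,\atE_7,\atE_8$, and by abelianizations together with Maxwell's results on quotients \cite{Maxwell1998} in types $\atB_n,\atC_n$ (the delicate case, since there the same point group admits three intermediate lattices and two genuinely different affine Coxeter groups). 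In effect the paper proves the genus separation you need a posteriori, type by type, rather than deducing it from general principles. Two further points you gloss over: low-dimensional cases are handled by citing profinite rigidity of crystallographic groups in dimension at most $4$ \cite{PiwekPopovicWilkes2021}, and the reduction of a reducible affine Coxeter group to its irreducible factors requires more than ``the completion of a product is the product of the completions''---one must show that $H$ itself decomposes compatibly, for which the paper invokes a cancellation result of Grunewald--Zalesskii \cite{GrunewaldZalesskii2011}.
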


Given a finite graph $\Gamma$ with the vertex set $V(\Gamma)$, the edge set $E(\Gamma)$ and an edge-labeling $m\colon E(\Gamma)\to\mathbb{N}_{\geq 3}\cup\left\{\infty\right\}$, the associated \emph{Coxeter group} $W_\Gamma$ is given by the presentation
\begin{gather*}
W_{\Gamma}=\left\langle V(\Gamma)\ \middle\vert \begin{array}{l} 
v^2\text{ for all }v\in V(\Gamma), (vw)^2 \text{ if }\left\{v,w\right\}\notin E(\Gamma),\\ (vw)^{m(\left\{v,w\right\})}\text{ if } \left\{v,w\right\}\in E(\Gamma) \text{ and } m(\left\{v,w\right\})<\infty\end{array} \right\rangle.
\end{gather*}

The Coxeter groups associated to the graphs in \Cref{affineCoxeterGraphs} are precisely the \emph{irreducible affine Coxeter groups}. More generally, a Coxeter group $W_\Gamma$ is \emph{affine} if $\Gamma$ is a disjoint union of those graphs.
It was shown in \cite{MollerVarghese2023} that irreducible affine Coxeter groups are profinitely rigid relative to the class consisting of all Coxeter groups, our main result generalises this.  Other work on profinite rigidity of Coxeter groups can be found in \cite{BridsonConderReid2016,BridsonMcReynoldsReidSpitler2021,CorsonHughesMollerVarghese2023,SantosRegoSchwer2022}.
\begin{figure}[h]
	\begin{center}
	\captionsetup{justification=centering}
		\begin{tikzpicture}[scale=0.8, transform shape]
			\draw[fill=black]  (0,0) circle (2pt);
			\draw[fill=black]  (1,0) circle (2pt);
			\draw (0,0)--(1,0);
			\node at (0.5,0.2){$\infty$};
			\node at (-0.9, 0) {$\atI$};	
            \draw[fill=black] (0,-1.5) circle (2pt);
            \draw[fill=black] (1,-1.5) circle (2pt);
            \draw (0,-1.5)--(1,-1.5);
            \draw[fill=black] (2,-1.5) circle (2pt);
            \draw[fill=black] (3,-1.5) circle (2pt);
            \draw[dashed] (1,-1.5)--(2, -1.5);
            \draw (2,-1.5)--(3,-1.5);
            \draw[fill=black] (1.5, -0.8) circle (2pt);
            \draw (0,-1.5)--(1.5, -0.8);
            \draw (3,-1.5)--(1.5, -0.8);
            \node at (-0.9, -1.5) {$\underset{n\geq 2}{\atA_n}$};	
            \draw[fill=black] (0,-3) circle (2pt);
            \draw[fill=black] (1,-3) circle (2pt);
            \node at (0.5,-2.8) {$4$};
            \draw (0,-3)--(1,-3);
            \draw (1,-3)--(2,-3);
            \draw[dashed] (2,-3)--(3,-3);
            \draw[fill=black] (3,-3) circle (2pt);
            \draw[fill=black] (2,-3) circle (2pt);
            \draw[fill=black] (4, -2.3) circle (2pt);
            \draw[fill=black] (4, -3.7) circle (2pt);
            \draw (3, -3)--(4,-2.3);
            \draw (3,-3)--(4, -3.7);
            \node at (-0.9, -3) {$\underset{n\geq 3}{\atB_n}$};	
            \draw[fill=black] (0,-4.5) circle (2pt);
            \draw[fill=black] (1,-4.5) circle (2pt);
            \draw (0,-4.5)--(1, -4.5);
            \node at (0.5, -4.3) {$4$};
            \draw[fill=black] (2, -4.5) circle (2pt);
            \draw (1, -4.5)--(2,-4.5);
            \draw[dashed] (2,-4.5)--(3,-4.5);
            \draw[fill=black] (3,-4.5) circle (2pt);
            \draw[fill=black] (4,-4.5) circle (2pt);
            \draw (3,-4.5)--(4,-4.5);
            \node at (3.5, -4.3) {$4$};
            \node at (-0.9, -4.5) {$\underset{n\geq 2}{\atC_n}$};
            \draw[fill=black] (0,-5.3) circle (2pt);
            \draw[fill=black] (0,-6.7) circle (2pt);
            \draw[fill=black] (1,-6) circle (2pt);
            \draw (0,-5.3)--(1,-6);
            \draw (0,-6.7)--(1,-6);
            \draw[fill=black] (2,-6) circle (2pt);
            \draw (1,-6)--(2,-6);
            \draw[dashed] (2,-6)--(3,-6);
            \draw[fill=black] (3,-6) circle (2pt);
            \draw[fill=black] (4, -5.3) circle (2pt);
            \draw[fill=black] (4, -6.7) circle (2pt);
            \draw (3,-6)--(4, -5.3);
            \draw (3,-6)--(4, -6.7);           
            \node at (-0.9, -6) {$\underset{n\geq 4}{\atD_n}$};
            \draw[fill=black] (6,0) circle (2pt);
            \draw[fill=black] (7,0) circle (2pt);
            \draw[fill=black] (8,0) circle (2pt);
            \draw[fill=black] (9,0) circle (2pt);
            \draw[fill=black] (10,0) circle (2pt);
            \draw[fill=black] (8,1) circle (2pt);
            \draw[fill=black] (8,2) circle (2pt);
            \draw (6,0)--(10,0);
            \draw (8,0)--(8,2);
            \node at (5.5,0) {$\atE_6$};
            \draw[fill=black] (6, -1.5) circle (2pt);
            \draw[fill=black] (7, -1.5) circle (2pt);
            \draw[fill=black] (8, -1.5) circle (2pt);
            \draw[fill=black] (9, -1.5) circle (2pt);
            \draw[fill=black] (10, -1.5) circle (2pt);
            \draw[fill=black] (11, -1.5) circle (2pt);
            \draw[fill=black] (12, -1.5) circle (2pt);
            \draw[fill=black] (9, -0.5) circle (2pt);
            \draw (6,-1.5)--(12, -1.5);
            \draw (9, -1.5)--(9, -0.5);
            \node at (5.5,-1.5) {$\atE_7$};
            \draw[fill=black] (6, -3) circle (2pt);
            \draw[fill=black] (7, -3) circle (2pt);
            \draw[fill=black] (8, -3) circle (2pt);
            \draw[fill=black] (9, -3) circle (2pt);
            \draw[fill=black] (10, -3) circle (2pt);
            \draw[fill=black] (11, -3) circle (2pt);
            \draw[fill=black] (12, -3) circle (2pt);
            \draw[fill=black] (13, -3) circle (2pt);
            \draw[fill=black] (8, -2) circle (2pt);
            \draw (6,-3)--(13, -3);
            \draw (8, -3)--(8, -2);
            \node at (5.5,-3) {$\atE_8$};
            \draw[fill=black] (6, -4.5) circle (2pt);
            \draw[fill=black] (7, -4.5) circle (2pt);
            \draw[fill=black] (8, -4.5) circle (2pt);
            \draw[fill=black] (9, -4.5) circle (2pt);
            \draw[fill=black] (10, -4.5) circle (2pt);
            \draw (6, -4.5)--(10, -4.5);
            \node at (8.5, -4.3) {$4$};
            \node at (5.5,-4.5) {$\atF$};
            \draw[fill=black] (6, -6) circle (2pt);
            \draw[fill=black] (7, -6) circle (2pt);
            \draw[fill=black] (8, -6) circle (2pt);
            \draw (6,-6)--(8, -6);
            \node at (7.5, -5.8) {$6$};
            \node at (5.5,-6) {$\atG$};   
		\end{tikzpicture}
	\caption{Coxeter graphs of affine type.
 }\label{affineCoxeterGraphs}
	\end{center}
\end{figure}
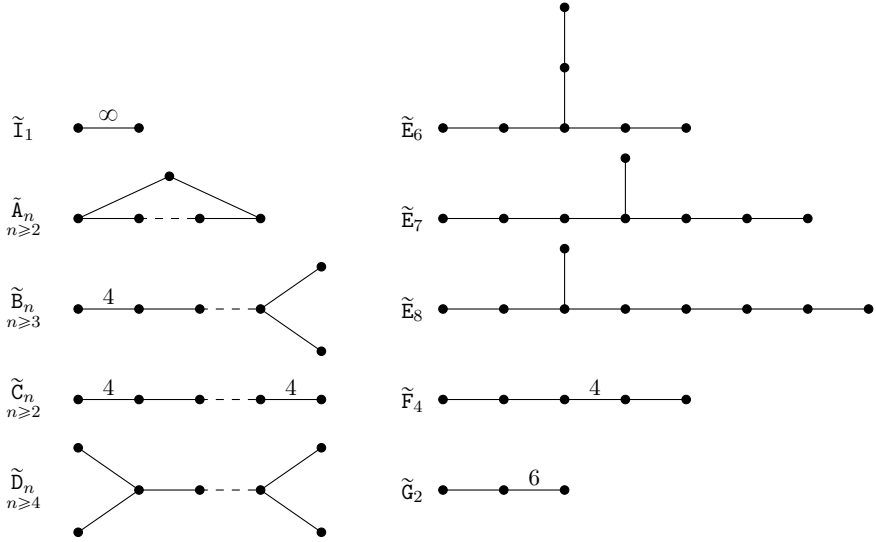 

An \emph{$n$-dimensional crystallographic group} $G$ is a discrete, cocompact subgroup of the group of isometries of the Euclidean space $\mathbb{E}^n$.
An $n$-dimensional crystallographic group $G$ always gives rise to the short exact sequence
$1\hookrightarrow \mathbb{Z}^n\hookrightarrow G \twoheadrightarrow P\twoheadrightarrow 1$
where $P$ is finite and is called the \emph{point group} of $G$. By definition, $G$ is \emph{symmorphic} if the above short exact sequence splits. Note that affine Coxeter groups are examples of symmorphic crystallographic groups.

The next proposition collects old and new profinite invariants of crystallographic groups.  A group $G$ is said to be \emph{just infinite} if $G$ itself is infinite but all proper quotients of $G$ are finite. Let $G$ and $H$ be  crystallographic groups with point groups $P_1, P_2\leqslant\GL_n(\Z)$. By definition, $P_1$ and $P_2$ are in the same $\mathbb{Q}$-class if they are conjugate in $\GL_n(\QQ)$. For a group $G$, the lattice $\mathcal{CF}(G)$ is the lattice of finite subgroups of $G$ modulo the conjugacy relation.

\begin{prop}\label{prop.invariants}
        Let $G$ be an $n$-dimensional crystallographic group and $H$ be a finitely generated residually finite group. If $\mathcal{F}(G)=\mathcal{F}(H)$, then $H$ is an $n$-dimensional crystallographic group whose point group is isomorphic to the point group of $G$. 
    In particular, if $G$ is profinitely rigid relative to the class of $n$-dimensional crystallographic groups, then $G$ is profinitely rigid in the absolute sense. Moreover, the following statements hold
    \begin{enumerate}
        \item $\mathcal{CF}(G)=\mathcal{CF}(H)$;
        \item $G$ is torsion free if and only if $H$ is torsion free;
        \item $G$ is centreless if and only if $H$ is centreless;
        \item $G$ is just infinite if and only if $H$ is just infinite;
        \item $G^{ab}\cong H^{ab}$;
        \item $G$ is symmorphic if and only if $H$ is symmorphic;
        \item the point group of $G$ is in the same $\mathbb{Q}$-class as the point group of $H$. 
    \end{enumerate}
\end{prop}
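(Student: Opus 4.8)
The plan is to pass to profinite completions throughout. Since $G$ and $H$ are finitely generated and residually finite, $\mathcal{F}(G)=\mathcal{F}(H)$ is equivalent to a topological isomorphism $\widehat{G}\cong\widehat{H}$, and as both completions are topologically finitely generated, any abstract isomorphism between them is continuous by the theorem of Nikolov--Segal. Because the translation subgroup $\ZZ^n\leqslant G$ has finite index, profinite completion is exact on $1\to\ZZ^n\to G\to P\to 1$ and gives $1\to\widehat{\ZZ}^n\to\widehat{G}\to P\to 1$, where $\widehat{\ZZ}^n=\prod_p\ZZ_p^n$ and the finite point group $P$ acts faithfully (the faithful action $P\hookrightarrow\GL_n(\ZZ)$ extends to $\GL_n(\widehat{\ZZ})$). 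First I would prove that $H$ is $n$-dimensional crystallographic. The open subgroup $\widehat{\ZZ}^n\leqslant\widehat{H}$ corresponds to a finite-index subgroup $H_0\leqslant H$ with $\widehat{H_0}\cong\widehat{\ZZ}^n$; all finite quotients of $H_0$ are then abelian, so residual finiteness forces $[H_0,H_0]=1$, and comparison of profinite completions of finitely generated abelian groups gives $H_0\cong\ZZ^n$. Hence $H$ is virtually $\ZZ^n$. Moreover $\widehat{G}$ has no nontrivial finite normal subgroup, since such a subgroup would meet the torsion-free $\widehat{\ZZ}^n$ trivially, hence centralise it and inject into $P$, contradicting faithfulness; as a finite normal subgroup of $H$ stays normal in $\widehat{H}$, the group $H$ also has trivial finite radical. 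By Bieberbach's characterisation, a finitely generated virtually-$\ZZ^n$ group with trivial finite radical is $n$-dimensional crystallographic.

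Next I would identify the translation subgroup intrinsically. The closure $\widehat{\ZZ}^n=\overline{\ZZ^n}$ is self-centralising in $\widehat{G}$, being the kernel of the faithful conjugation action of $P$ on it; and it is the unique open normal self-centralising closed subgroup of $\widehat{G}$ isomorphic to $\widehat{\ZZ}^n$. Indeed, if $N_1,N_2$ are two such, an element of $N_1$ acts on $N_2$ through the finite quotient $P$ while fixing the open subgroup $N_1\cap N_2$ pointwise, and a finite-order automorphism of $\widehat{\ZZ}^n$ fixing an open subgroup pointwise is trivial (as $\widehat{\ZZ}$ is torsion-free); thus $N_1\subseteq C_{\widehat{G}}(N_2)=N_2$, and symmetrically. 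Therefore $\widehat{\ZZ}^n$ is topologically characteristic, the continuous isomorphism $\widehat{G}\cong\widehat{H}$ carries the translation subgroup of $G$ onto that of $H$ and induces an isomorphism $P_G\cong P_H$ of point groups. The ``in particular'' clause is then immediate: any $H$ with $\mathcal{F}(H)=\mathcal{F}(G)$ is crystallographic, so relative rigidity implies absolute rigidity.

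For the listed invariants I would argue as follows. Conjugacy classes of finite subgroups of a crystallographic group are classified, through splittings of the extension over subgroups $Q\leqslant P$, by data assembled from $H^2(Q,\ZZ^n)$ and $H^1(Q,\ZZ^n)$; these are finite and unchanged by $-\otimes\widehat{\ZZ}$, so $G\hookrightarrow\widehat{G}$ induces a lattice isomorphism between the conjugacy classes of finite subgroups of $G$ and of $\widehat{G}$. This yields (1), and, specialising to the trivial subgroup and to subgroups of maximal order $|P|$, also (2) (torsion-freeness $\Leftrightarrow$ no nontrivial finite subgroup) and (6) (symmorphicity $\Leftrightarrow$ existence of a complement isomorphic to $P$). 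For (3) one computes $Z(\widehat{G})=(\widehat{\ZZ}^n)^{P}=\overline{Z(G)}\cong\widehat{\ZZ}^{\,r}$ with $r=\rank (\ZZ^n)^{P}$; this is topologically characteristic, so $\widehat{G}\cong\widehat{H}$ forces $r_G=r_H$, and centrelessness ($r=0$) is preserved. For (5), $\widehat{G^{ab}}\cong\widehat{G}/\overline{[\widehat{G},\widehat{G}]}$ is preserved by the isomorphism, and finitely generated abelian groups with isomorphic profinite completions are isomorphic, so $G^{ab}\cong H^{ab}$.

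The hard part, treated last, is (7). The continuous isomorphism restricts to a $\widehat{\ZZ}[P]$-module isomorphism $L_G\otimes\widehat{\ZZ}\cong L_H\otimes\widehat{\ZZ}$ of the translation lattices (identifying the point groups via the induced isomorphism), hence a $\ZZ_p[P]$-isomorphism after completing at each prime $p$. Tensoring with $\QQ_p$ and using that characters of representations in characteristic zero are field-independent shows that $L_G\otimes\QQ$ and $L_H\otimes\QQ$ have the same $\QQ$-valued character, so they are isomorphic $\QQ[P]$-modules by semisimplicity; this is precisely the assertion that the point groups lie in the same $\QQ$-class. Finally (4) follows from (7): a crystallographic group is just infinite exactly when its point group acts $\QQ$-irreducibly on $\QQ^n$, which depends only on the $\QQ$-class. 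The main obstacle I anticipate is the equivariant bookkeeping in (7) --- ensuring the module isomorphism is genuinely compatible with the identification $P_G\cong P_H$ --- together with checking that the cohomological classification of finite-subgroup conjugacy classes is insensitive to $-\otimes\widehat{\ZZ}$.
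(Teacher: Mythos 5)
Your proposal is correct in substance and reaches every item, but it follows a genuinely different and more self-contained route than the paper. The paper's proof is largely an assembly of references: that $H$ is $n$-dimensional crystallographic with the same point group is deduced from Grunewald--Zalesskii via \Cref{rigidity_symm_tf}; item (1) is \Cref{NormalFiniteCrystallographic}, proved by finite-subgroup separability (injectivity) plus a Kropholler--Wilson-style vanishing of $H^1(H;\widehat{A}/A)$ using divisibility (surjectivity); item (3) is a transfer-map argument (\Cref{centre}); items (4) and (7) rest on Ratcliffe--Tschantz, Curtis--Reiner, and a citation of Piwek--Popovic--Wilkes. Your substitute for most of this is a single structural lemma: $\overline{\ZZ^n}\cong\widehat{\ZZ}^n$ is the \emph{unique} open normal self-centralising closed subgroup of its type in $\widehat{G}$ (your torsion-freeness argument for uniqueness is correct, and finite order of the acting element is not even needed). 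That one fact forces any isomorphism $\widehat{G}\to\widehat{H}$ to respect translation subgroups, hence gives the point-group isomorphism without Grunewald--Zalesskii; it gives $Z(\widehat{G})=(\widehat{\ZZ}^n)^{P}=Z(G)\otimes\widehat{\ZZ}$, which is cleaner than the paper's transfer argument for (3); and it sets up (7) as a $\widehat{\ZZ}[P]$-module isomorphism of translation lattices, from which localisation at a single prime and comparison of characters (Noether--Deuring) yield the $\QQ$-class statement directly rather than by citation. Your treatment of (1) via the $H^1/H^2$ classification of complements and the base-change isomorphisms $H^i(Q;L)\xrightarrow{\ \sim\ }H^i(Q;L\otimes\widehat{\ZZ})$ is parallel in spirit to the paper's but differently organised; the bookkeeping you flag is real though formal: one must check the bijection is equivariant for the $P$-action on the fibrewise $H^1$-torsors, and the order-preservation of the inverse of $[A]\mapsto[\iota(A)]$ then follows from injectivity, since any finite subgroup of $\iota(B)$ is $\iota$ of a subgroup of $B$. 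Two facts you assert without proof should be cited or argued: that a finite normal subgroup $N\trianglelefteq H$ stays normal in $\widehat{H}$ is exactly the paper's \Cref{FNS} (provable, e.g., by noting $C_H(N)$ has finite index, so $\widehat{H}=H\cdot\overline{C_H(N)}$ and both factors normalise $\iota(N)$); and the criterion that a crystallographic group is just infinite if and only if its point group acts $\QQ$-irreducibly, used for (4), is the Ratcliffe--Tschantz/Curtis--Reiner input that the paper also quotes. Neither is a gap in the mathematics, but both need attribution.
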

\begin{proof}
The first paragraph of the proposition is given by \Cref{rigidity_symm_tf}.  We now, prove the ``moreover''.  (1) is \Cref{LatticeProfiniteInvariant}, (2) is given by \Cref{rigidity_symm_tf}, (3) is \Cref{centre}, (4) is \Cref{justinfinite}, (5) is classical (see for example \cite{Reid2018}), (6) essentially follows from Grunewald--Zalesskii \cite{GrunewaldZalesskii2011} but we include a proof for completeness (See \Cref{rigidity_symm_tf}), and finally  (7) is due to Piwek--Popovic--Wilkes \cite[page 558]{PiwekPopovicWilkes2021}.
\end{proof}

\begin{remark}\label{4Dim}
    If follows from \Cref{prop.invariants} and \cite{PiwekPopovicWilkes2021} that every crystallographic group in dimension at most $4$ is profinitely rigid. Note that \cite{Finken1980} provides an example of an $11$-dimensional crystallographic group with point group of order $55$ which is not profinitely rigid. Further, for each prime number $p\geq 23$ there exist non profinitely rigid crystallographic groups of shape $\Z^{p-1}\rtimes\Z_p$, see \cite[Theorem 1]{Brigham1971}.  
\end{remark}

\bigskip

\subsection*{Acknowledgements} 
This work has received funding from the European Research Council (ERC) under the European Union's Horizon 2020 research and innovation programme (Grant agreement No. 850930). OV is supported by DFG grant VA~1397/2-2. 

\section{The ingredients in Proposition~{\ref{prop.invariants}}}
The following lemma is well known.

\begin{lemma}
\label{CharacterisationCrystallographicGroup}
Let $G$ be a virtually free abelian group of rank $n\geq 1$. The group $G$ is an $n$-dimensional crystallographic group if and only if $G$ does not have non-trivial finite normal subgroups.
\end{lemma}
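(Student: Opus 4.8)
The plan is to prove both implications by locating, inside $G$, a canonical normal free abelian subgroup of finite index that is \emph{self-centralizing} (i.e. equal to its own centralizer), and then to invoke the classical characterization coming from Bieberbach's and Zassenhaus' theorems: a group is isomorphic to an $n$-dimensional crystallographic group if and only if it contains a normal, free abelian subgroup of rank $n$, of finite index, which equals its own centralizer.

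For the forward implication I would suppose $G$ is crystallographic and take $L = G \cap \RR^n$, its translation subgroup, which by Bieberbach's first theorem is normal, free abelian of rank $n$, of finite index, and self-centralizing. The self-centralizing property is a direct computation in $\Isom(\EE^n) = \RR^n \rtimes \OO(n)$: conjugating the translation by $t$ by an isometry $(v,A)$ produces the translation by $At$, so the centralizer of $L$ consists exactly of the translations in $G$, namely $L$ itself. Now let $N \trianglelefteq G$ be finite. Since $N$ and $L$ are both normal, $[N,L] \subseteq N \cap L$, and as $L$ is torsion-free this intersection is trivial; hence $N$ centralizes $L$, so $N \subseteq C_G(L) = L$ and therefore $N \subseteq N \cap L = 1$. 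Thus $G$ has no non-trivial finite normal subgroup.

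For the converse I would assume $G$ is virtually $\ZZ^n$ with no non-trivial finite normal subgroup and construct the required lattice. First, passing to the normal core of a finite-index copy of $\ZZ^n$ yields a normal subgroup $A \cong \ZZ^n$ of finite index. I then set $C := C_G(A)$; this is normal in $G$, contains $A$ centrally, and has finite index, since the conjugation action of $G$ on $A$ is trivial on $A$ and hence factors through the finite group $G/A$. Because $Z(C) \supseteq A$ has finite index in $C$, Schur's theorem shows $[C,C]$ is finite; being characteristic in the normal subgroup $C$, it is a finite normal subgroup of $G$, hence trivial by hypothesis, so $C$ is abelian. As $C$ is finitely generated abelian containing $\ZZ^n$ with finite index, its torsion subgroup is finite and characteristic, hence again a finite normal subgroup of $G$, hence trivial; therefore $C \cong \ZZ^n$ is torsion-free of rank $n$. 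Finally $C$ is self-centralizing: $C_G(C) \subseteq C_G(A) = C$ since $A \subseteq C$, and $C$ is abelian, so $C_G(C) = C$. Applying the characterization theorem to the normal, finite-index, self-centralizing subgroup $C \cong \ZZ^n$ then gives that $G$ is crystallographic.

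The main obstacle is the final step of the converse: realizing the abstract extension $1 \to \ZZ^n \to G \to P \to 1$, with $P = G/C$ acting faithfully on $C$, as an actual discrete cocompact subgroup of $\Isom(\EE^n)$. The content there is that $P$ embeds in $\GL_n(\ZZ)$, hence preserves a positive-definite form and is conjugate into $\OO(n)$, and that $H^2(P;\RR^n) = 0$ because $\RR^n$ is a $\QQ$-vector space and $P$ is finite; pushing the defining cocycle of $G$ along $\ZZ^n \hookrightarrow \RR^n$ therefore splits, realizing $G$ inside $\RR^n \rtimes \OO(n)$ with $C$ as a full lattice of finite index. I would either cite this realization (Bieberbach/Zassenhaus) or include the short cohomological argument; everything preceding it is elementary group theory, so this is where the real work lies.
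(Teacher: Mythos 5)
Your proof is correct, but note that the paper gives no proof of this lemma at all: it is introduced with ``The following lemma is well known'' and left as a citation-free fact. So there is no paper argument to compare against; what you have written is essentially the standard argument the authors are implicitly relying on, namely the Bieberbach--Zassenhaus characterization of crystallographic groups as the virtually-$\ZZ^n$ groups containing a normal, finite-index, self-centralizing copy of $\ZZ^n$. Both of your directions check out: in the forward direction, the commutator trick $[N,L]\subseteq N\cap L=1$ together with $C_G(L)=L$ (which uses that the translation lattice spans $\RR^n$, i.e.\ Bieberbach's first theorem) is exactly right; in the converse, the chain ``normal core $\Rightarrow$ centralizer $C=C_G(A)$ of finite index $\Rightarrow$ Schur's theorem kills $[C,C]$ $\Rightarrow$ torsion subgroup is a finite characteristic (hence normal) subgroup $\Rightarrow$ $C\cong\ZZ^n$ is self-centralizing'' is complete and uses the no-finite-normal-subgroup hypothesis in precisely the two places it is needed. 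The one step you defer --- realizing the abstract extension inside $\Isom(\EE^n)$ via $H^2(P;\RR^n)=0$ and averaging an inner product to conjugate $P$ into $\OO(n)$ --- is the classical Zassenhaus realization theorem, and your sketch of it (including the faithfulness of the $P$-action on $C$, which follows from $C_G(C)=C$) is accurate; citing it rather than reproving it is entirely reasonable for a lemma the paper itself declines to prove.
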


Let $G$ be a group and $\mathcal{N}$ be the set of all finite index normal subgroups of $G$. We equip each $G/N$, $N\in\mathcal{N}$ with the discrete topology and endow $\prod_{N\in\mathcal{N}} G/N$ with the product topology. We define a map 
$$\iota\colon G\to \prod_{N\in\mathcal{N}} G/N\text{ by }g\mapsto (gN)_{N\in\mathcal{N}}.$$ The map $\iota$ is injective if and only if $G$ is residually finite. The \emph{profinite completion} of $G$, denoted by $\widehat{G}$, is defined as $\widehat{G}:=\overline{\iota(G)}$. Let $G$ and $H$ be finitely generated residually finite groups. Then $\mathcal{F}(G)=\mathcal{F}(H)$ if and only if $\widehat{G}\cong\widehat{H}$, see \cite{Dixon1982}.

\begin{lemma}
\label{FNS}
Let $G$ be a finitely generated residually finite group. Denote by $\iota\colon G\to\widehat{G}$ the canonical homomorphism. If $N\trianglelefteq G$ is a finite normal subgroup, then $\iota(N)$ is normal in $\widehat{G}$. 
\end{lemma}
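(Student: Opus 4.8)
The plan is to exploit the density of $\iota(G)$ in $\widehat{G}$ together with the continuity of conjugation in a topological group. First I would record the two structural facts that make everything work. Since $\iota$ is a group homomorphism and $N$ is finite, the image $\iota(N)$ is a finite subgroup of $\widehat{G}$. Moreover $\widehat{G}$ is Hausdorff, being a (closed) subspace of the product $\prod_{N\in\mathcal{N}}G/N$ of discrete, hence Hausdorff, groups; consequently every finite subset of $\widehat{G}$, in particular $\iota(N)$, is closed. Finally, by the very definition of the profinite completion as $\widehat{G}=\overline{\iota(G)}$, the subgroup $\iota(G)$ is dense in $\widehat{G}$.

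Next I would verify the claim on the dense subgroup, which is where normality of $N$ enters. For every $g\in G$ we have $\iota(g)\,\iota(N)\,\iota(g)^{-1}=\iota(gNg^{-1})=\iota(N)$, since $\iota$ is a homomorphism and $gNg^{-1}=N$. Thus $\iota(N)$ is normalised by all of $\iota(G)$.

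To upgrade this from $\iota(G)$ to all of $\widehat{G}$, I would fix an element $x\in\iota(N)$ and consider the map $c_x\colon\widehat{G}\to\widehat{G}$, $\hat{g}\mapsto\hat{g}\,x\,\hat{g}^{-1}$. This map is continuous, because multiplication and inversion in the topological group $\widehat{G}$ are continuous. Hence the preimage $c_x^{-1}(\iota(N))$ is closed, and by the previous paragraph it contains the dense subset $\iota(G)$; therefore $c_x^{-1}(\iota(N))=\widehat{G}$. As $x\in\iota(N)$ was arbitrary, this yields $\hat{g}\,\iota(N)\,\hat{g}^{-1}\subseteq\iota(N)$ for every $\hat{g}\in\widehat{G}$. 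Applying this inclusion also to $\hat{g}^{-1}$ gives the reverse containment $\iota(N)\subseteq\hat{g}\,\iota(N)\,\hat{g}^{-1}$ (equivalently, one notes that conjugation by $\hat g$ is injective and $\iota(N)$ is finite), so in fact $\hat{g}\,\iota(N)\,\hat{g}^{-1}=\iota(N)$. This is exactly the assertion $\iota(N)\trianglelefteq\widehat{G}$.

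I do not expect a genuine obstacle here; the argument is a routine density-and-continuity closing-up. The only point that genuinely requires care is that the whole scheme rests on $\iota(N)$ being closed, which is precisely why the finiteness of $N$ is essential: for an infinite normal subgroup the set $\iota(N)$ need not be closed, and the preimage $c_x^{-1}(\iota(N))$ could fail to be closed, so the passage from the dense subgroup to all of $\widehat{G}$ would break down.
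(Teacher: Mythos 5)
Your proof is correct, and it takes a genuinely different route from the paper's. The paper argues by contradiction via residual finiteness of $\widehat{G}$: if some conjugate $gng^{-1}$ with $n\in\iota(N)$ escaped $\iota(N)$, it picks finitely many epimorphisms onto finite groups separating each element of the finite set $S=\{gng^{-1}m : m\in\iota(N)\}$ from the identity, takes their product $\psi$, and then invokes \cite[Lemma 2.1]{BridsonConderReid2016} to get $\psi(\iota(G))=\psi(\widehat{G})$; since $\psi(\iota(N))$ is normal in that common image, $1\in\psi(S)$ must hold, a contradiction. Your argument instead exposes the topological mechanism directly: $\iota(N)$ is finite, hence closed in the Hausdorff group $\widehat{G}$; for each $x\in\iota(N)$ the set $c_x^{-1}(\iota(N))$ is closed and contains the dense subgroup $\iota(G)$, hence equals $\widehat{G}$, and finiteness (or the symmetric inclusion for $\hat{g}^{-1}$) upgrades $\hat{g}\,\iota(N)\,\hat{g}^{-1}\subseteq\iota(N)$ to equality. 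What your approach buys: it is self-contained (no appeal to residual finiteness of $\widehat{G}$ or to the external lemma), avoids the contradiction framing, and isolates the true general statement --- any \emph{closed} subgroup of $\widehat{G}$ normalized by a dense subgroup is normal --- making transparent that finiteness of $N$ is used only to ensure $\iota(N)$ is closed, exactly as you remark. What the paper's approach buys: it stays entirely in the language of finite quotients, matching the profinite-rigidity toolkit used throughout the paper, though the cited lemma doing the key step is itself proved by essentially the same density observation you make explicit. Both proofs rest on density of $\iota(G)$ in $\widehat{G}$; yours is the more direct and arguably cleaner of the two.
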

Compare to the proof of Theorem 3.6 in \cite{BridsonConderReid2016}. 
\begin{proof}
Assume for contradiction that $\iota(N)$ is not normal in $\widehat{G}$. Then there exist $n\in \iota(N)$ and $g\in\widehat{G}$ such that $gng^{-1}\notin\iota(N)$. Hence the finite set $S:=\left\{gng^{-1}m\mid m\in\iota(N)\right\}$ does not include the trivial element. We know that $\iota(N)=\left\{m_1,\ldots, m_l\right\}$. Since $\widehat{G}$ is residually finite, there exists an epimorphism $\psi_k\colon\widehat{G}\twoheadrightarrow H_k$ with $H_k$ finite and $\psi_k(gng^{-1}m_k)\neq 1$ for every $k\in \{1,\ldots ,l\}$. 

Define $\psi = \psi_1\times\ldots\times\psi_l\colon\widehat{G}\to H_1\times\ldots\times H_l$ by 
$(\psi_1\times\ldots\times\psi_l)(h)=(\psi_1(h),\ldots,\psi_l(h))$. In particular, this map has finite image and $1 \notin \psi(S)$. But $\psi\circ \iota(N)$ is normal in the image $\psi\circ \iota(G)$, and $\psi\circ \iota(G) = \psi(\widehat{G})$ by \cite[Lemma 2.1]{BridsonConderReid2016}, so it is necessary that $1 \in \psi(S)$. This contradiction shows that $\iota(N)$ is normal in $\widehat{G}$.
\end{proof}

Given a group $G$ we denote by $\mathcal{CF}(G)$ the set of conjugacy classes of all finite subgroups in $G$.  We define a partial order on $\mathcal{CF}(G)$ as follows: $[A]\leq [B]$ if there exists a $g\in G$ such that $A\subseteq gBg^{-1}$.

\begin{prop}
\label{NormalFiniteCrystallographic}
Let $G$ be a finitely generated virtually free abelian group. Then, $\mathcal{CF}(G)=\mathcal{CF}(\widehat G)$.
\end{prop}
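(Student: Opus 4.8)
The plan is to understand finite subgroups of $\widehat G$ through the extension structure coming from a free abelian normal subgroup, and to match conjugacy classes via a cohomological comparison. First I would fix a finite-index normal subgroup $A\trianglelefteq G$ that is free abelian of rank $n$; such an $A$ exists because $G$ is finitely generated and virtually $\ZZ^n$ (intersect the finitely many conjugates of a finite-index free abelian subgroup). Write $Q=G/A$, a finite group of order $m$, and let $\rho\colon G\to Q$ be the quotient map. Since $A$ has finite index in the finitely generated group $G$, its profinite topology is induced from that of $G$, so $\overline{\iota(A)}\cong\widehat A\cong\widehat{\ZZ}^n$ is the kernel of the induced epimorphism $\pi\colon\widehat G\to Q$; thus $1\to\widehat A\to\widehat G\to Q\to 1$ is exact and $\widehat A$ is torsion-free. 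As any finite subgroup meets a torsion-free normal subgroup trivially, every finite subgroup of $G$ (resp. of $\widehat G$) injects under $\rho$ (resp. $\pi$) onto a subgroup $\bar F\le Q$ and is precisely a complement to $A$ in $G_{\bar F}:=\rho^{-1}(\bar F)$ (resp. to $\widehat A$ in $\widehat{G_{\bar F}}=\pi^{-1}(\bar F)$).

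The key step is a cohomological comparison. For a subgroup $\bar F\le Q$, the standard theory of extensions with abelian kernel tells us that $G_{\bar F}$ (resp. $\widehat{G_{\bar F}}$) admits a complement if and only if its class in $H^2(\bar F,A)$ (resp. $H^2(\bar F,\widehat A)$) vanishes, and that when complements exist their $A$-conjugacy classes (resp. $\widehat A$-conjugacy classes) form a torsor under $H^1(\bar F,A)$ (resp. $H^1(\bar F,\widehat A)$). Here I use that $\bar F$ is finite, so every complement is a finite, hence closed, subgroup corresponding to an automatically continuous section, making continuous and abstract cohomology agree. I would then show that $A\hookrightarrow\widehat A=A\otimes_{\ZZ}\widehat{\ZZ}$ induces isomorphisms $H^i(\bar F,A)\xrightarrow{\cong}H^i(\bar F,\widehat A)$ for $i\ge 1$: since $\widehat{\ZZ}$ is flat over $\ZZ$ and $\ZZ$ admits a resolution by finitely generated free $\ZZ[\bar F]$-modules, tensoring commutes with cohomology, giving $H^i(\bar F,\widehat A)\cong H^i(\bar F,A)\otimes_{\ZZ}\widehat{\ZZ}$; and each $H^i(\bar F,A)$ with $i\ge 1$ is finite (finitely generated and annihilated by $|\bar F|$), so $M\otimes_{\ZZ}\widehat{\ZZ}\cong M$ for such $M$ identifies the two. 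These isomorphisms are natural in $\bar F$ and compatible with $\iota$.

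From this the equality $\mathcal{CF}(G)=\mathcal{CF}(\widehat G)$ follows. For surjectivity, given a finite $F\le\widehat G$ with image $\bar F$, vanishing of its class in $H^2(\bar F,\widehat A)$ forces vanishing in $H^2(\bar F,A)$, so $G_{\bar F}$ has a complement; the $H^1$-isomorphism, being realized by $\iota$, is onto, so $F$ is $\widehat A$-conjugate, hence $\widehat G$-conjugate, to $\iota(F_0)$ for some finite $F_0\le G$. For injectivity, if $\iota(F_1),\iota(F_2)$ are $\widehat G$-conjugate I first absorb the image in $Q$ of the conjugator by replacing $F_2$ with a genuine $G$-conjugate, reducing to the case that the conjugator lies in $\widehat A$ and $\rho(F_1)=\rho(F_2)=\bar F$; then $\iota(F_1),\iota(F_2)$ are $\widehat A$-conjugate complements, so they agree in $H^1(\bar F,\widehat A)$, hence in $H^1(\bar F,A)$, i.e. $F_1,F_2$ are $A$-conjugate in $G$. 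The same absorption trick, together with the observation that the unique subgroup of a complement lying over $\bar F\le\bar F'$ is again an $\iota$-image, handles the partial order: $[\iota F]\le[\iota F']$ reduces, after conjugating, to an inclusion $g^{-1}\iota(F)g\le\iota(F')$ with $g\in\widehat A$, identifying $\iota(F)$ with the $\iota$-image of the sub-complement of $F'$ over $\bar F$, whence $[F]\le[F']$ in $\mathcal{CF}(G)$.

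I expect the main obstacle to be making the extension-theoretic classification of complements precise inside the profinite group $\widehat G$: checking that the cohomology of the finite group $\bar F$ with the profinite coefficient module $\widehat A$ genuinely classifies existence and conjugacy of closed complements in $\widehat{G_{\bar F}}$, and that $\widehat{G_{\bar F}}$ realises the pushout of the extension defining $G_{\bar F}$ along $A\hookrightarrow\widehat A$ so that extension classes correspond. The flatness computation $H^i(\bar F,A)\cong H^i(\bar F,\widehat A)$ is the arithmetic heart, but it is routine once set up; the real care lies in turning these module isomorphisms into a bijection of conjugacy classes compatible with the partial order.
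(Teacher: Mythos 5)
Your proposal is correct, but it takes a genuinely different route from the paper's proof. The paper splits the bijection into two unrelated techniques: injectivity of $[F]\mapsto[\iota(F)]$ is outsourced to finite subgroup separability of virtually abelian groups (Grunewald--Segal, via a lemma from an earlier paper of the authors), while surjectivity follows a Kropholler--Wilson argument: given a finite $H\le\widehat G$, write $h=x_hg_h$ with $x_h\in\widehat A$, $g_h\in G$, observe that $h\mapsto x_hA$ is a derivation $H\to\widehat A/A$, and show $H^1(H;\widehat A/A)=0$ using divisibility of $\widehat A/A$; the resulting inner derivation produces $b\in\widehat A$ with $bHb^{-1}\subseteq G$. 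You instead compare $H^i(\bar F,A)$ with $H^i(\bar F,\widehat A)$ directly, via flatness of $\widehat\ZZ$ over $\ZZ$ and finiteness of positive-degree cohomology of finite groups; through the long exact sequence of $0\to A\to\widehat A\to\widehat A/A\to 0$, your isomorphisms in degrees $1$ and $2$ are equivalent to the paper's vanishing statement, but the implementation is different. What your route buys: it is self-contained (injectivity needs no separability input, coming instead from the injectivity of the $H^1$-comparison together with your absorption trick), and it treats the poset structure more completely than the paper, which only remarks that $\psi$ is order-preserving, whereas your sub-complement argument shows the inverse is order-preserving too. What the paper's route buys: brevity, since the separability citation disposes of injectivity at once, and the divisibility computation avoids your main technical overhead, namely identifying the extension class of $\pi^{-1}(\bar F)$ with the pushforward of that of $G_{\bar F}$. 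That last point, which you rightly flag as the delicate step, does go through: either by the short five lemma applied to the natural map from the pushout extension to $\pi^{-1}(\bar F)$, or more simply by choosing a set-theoretic section of $\pi^{-1}(\bar F)\to\bar F$ with values in $\iota(G_{\bar F})$, so that a representing $2$-cocycle visibly takes values in $\iota(A)$ and equals $\iota_*$ of a cocycle representing the class of $G_{\bar F}$.
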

\begin{proof}
Let $G$ be a finitely generated virtually free abelian group. 
We define a map $\psi\colon\mathcal{CF}(G)\rightarrow\mathcal{CF}(\widehat{G})$ via $\psi\left([A]\right):= \left[\iota(A)\right]$.  Note $\psi$ is clearly order preserving.
 
Virtually abelian groups are finite subgroup separable by \cite[Theorem 1]{GrunewaldSegal1978}. Thus by \cite[Lemma 3.4]{CorsonHughesMollerVarghese2023} the map $\psi$ is injective. 

We follow the proof strategy used in \cite[Theorem 2.7]{KrophollerWilson93}. 
 For the surjectivity we show that a finite subgroup of $\widehat{G}$ is conjugate to a finite subgroup of $G$. Let $H$ denote a finite subgroup of $\widehat{G}$. Since $G$ is virtually free abelian, there exists a normal subgroup $A\cong \Z^n$ such that $Q:=G/A$ is finite. 
Thus, we have $H\subseteq G\cdot\widehat{A}=\widehat{A}\cdot G$.  Define $\rho\colon H\times \widehat{A}\to\widehat{A}$ where $\rho(h, a)=hah^{-1}$. Since $G$ and $\widehat{A}$ normalize $\widehat{A}$, so does $H$. Thus $\widehat{A}$ is an $H$-module, $A$ is an $H$-submodule and $\widehat{A}/A$ is an $H$-module.

Let $h\in H$. There exist elements $g_h\in G$ and $x_h\in \widehat{A}$ such that $h=x_hg_h$. The element $x_h$ is in general not uniquely determined by $h$, however, its image in $\widehat{A}/A$ is, since $G\cap \widehat{A}=A$. 

Consider the map $D\colon H\to \widehat{A}/A$ by $h\mapsto x_hA$. A computation shows that the map $D$ is a \emph{derivation}, that is, $D(h_1h_2)=D(h_1)+h_1D(h_2)$, where $h_1D(h_2)=h_1x_{h_2}h_1^{-1}A$, for $h_1,h_2\in H$.   
We claim that $H^1(H;\widehat{A}/A)=0$. 
Indeed, let $k$ denote the order of $H$, let $f\in H^1(H;\widehat{A}/A)$ denote a derivation and $g\in H$ an arbitrary element and set $x:=\sum_{h\in H}f(h)$. Now, we can compute that $gx=\sum_{h\in H}f(h)-kf(g)=x-kf(g)$.
Therefore, $kf$ equals $0$ in $H^1(H;\widehat{A}/A)$; so $kf(g)=gx'-x'$ for some $x'\in \widehat{A}/A$. Since $\widehat{A}/A$ is $k$-divisible, we can divide by $k$ and obtain $f(g)=gy-y$ for $y=x'/k$.  Thus, $f=0$.

Since $H^1(H;\widehat{A}/A)=0$, we see that $D$ is an \emph{inner derivation}, that is there exists a $b\in \widehat{A}$ such that $D(h)=hbh^{-1}b^{-1}A$ for every $h\in H$. It follows that $bhb^{-1}\in G$, since $D(h)=x_hA=hbh^{-1}b^{-1}A$, which implies $g_h bhb^{-1}\in A\subseteq G$. Hence, $bHb^{-1} \subseteq G$ as desired. This implies the surjectivity of $\psi$.
\end{proof}

\begin{prop}\label{LatticeProfiniteInvariant}
    Let $G$ be a finitely generated virtually free abelian group  and $H$ be a finitely generated residually finite group such that $\widehat{G}\cong\widehat{H}$.  Then, $\mathcal{CF}(G)=\mathcal{CF}(H)$.
\end{prop}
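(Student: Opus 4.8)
The plan is to reduce everything to \Cref{NormalFiniteCrystallographic} by showing that $H$ is itself finitely generated virtually free abelian, and then to chain the resulting identifications. First observe that an isomorphism $\widehat{G}\cong\widehat{H}$ sends conjugate finite subgroups to conjugate finite subgroups and respects inclusion up to conjugacy, so it induces an isomorphism of posets $\mathcal{CF}(\widehat{G})\cong\mathcal{CF}(\widehat{H})$. Since \Cref{NormalFiniteCrystallographic} already gives the poset identification $\mathcal{CF}(G)=\mathcal{CF}(\widehat{G})$, the whole statement will follow once I know $\mathcal{CF}(\widehat{H})=\mathcal{CF}(H)$, and by a second application of \Cref{NormalFiniteCrystallographic} this in turn reduces to the single assertion that $H$ is finitely generated virtually free abelian.

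The heart of the argument is therefore recovering virtual free-abelianness of $H$ from the profinite datum $\widehat{H}\cong\widehat{G}$. Here is how I would do it. Pick a finite-index subgroup $A\cong\ZZ^n$ of $G$; then $\iota_G(A)$ is abelian, so its closure $\overline{\iota_G(A)}$ is an open abelian subgroup of $\widehat{G}$. Transporting through the isomorphism $\widehat{G}\cong\widehat{H}$ yields an open abelian subgroup $U\leqslant\widehat{H}$. As $U$ is open it has finite index, and since $\iota_H(H)$ is dense in $\widehat{H}$ the preimage $H_0:=\iota_H^{-1}(U)$ has finite index in $H$, equal to $[\widehat{H}:U]$. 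Because $H$ is residually finite the map $\iota_H$ is injective, so $H_0$ embeds into the abelian group $U$ and is therefore abelian; being a finite-index subgroup of the finitely generated group $H$, it is finitely generated. Hence $H$ is finitely generated virtually abelian, which is the same as finitely generated virtually free abelian.

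With this established, the proof concludes by the chain
\[ \mathcal{CF}(G)=\mathcal{CF}(\widehat{G})=\mathcal{CF}(\widehat{H})=\mathcal{CF}(H), \]
where the outer equalities are \Cref{NormalFiniteCrystallographic} applied to $G$ and to $H$ respectively, and the middle one is the poset isomorphism induced by $\widehat{G}\cong\widehat{H}$.

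The step I expect to carry the real content is the middle paragraph, i.e.\ deducing that $H$ is virtually free abelian; everything else is formal once \Cref{NormalFiniteCrystallographic} is available. The delicate bookkeeping there is to confirm that $\overline{\iota_G(A)}$ is genuinely open and abelian, that it transports to an open abelian $U$, and that the density of $\iota_H(H)$ forces $[H:H_0]=[\widehat{H}:U]<\infty$. One should also take care that the identifications in \Cref{NormalFiniteCrystallographic} are isomorphisms of posets and not merely order-preserving bijections, so that the displayed chain is an equality of lattices as claimed. (An alternative to the middle paragraph would be to invoke the paper's own result that $H$ is crystallographic, but the self-contained deduction above seems cleaner and avoids the issue that $G$ here is only assumed virtually free abelian rather than crystallographic.)
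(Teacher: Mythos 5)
Your proposal is correct and follows essentially the same route as the paper: the paper likewise deduces that $H$ is virtually free abelian from the embedding $H\hookrightarrow\widehat{H}\cong\widehat{G}$ into a virtually abelian profinite group, and then chains the order isomorphisms $\mathcal{CF}(G)\rightarrow\mathcal{CF}(\widehat{G})\rightarrow\mathcal{CF}(\widehat{H})\rightarrow\mathcal{CF}(H)$ obtained from \Cref{NormalFiniteCrystallographic}. Your middle paragraph simply spells out in more detail the step the paper states in one line, and your reading of the conclusion as a poset isomorphism (with isomorphic representatives) matches the paper's remark about the composite map $\alpha$.
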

\begin{proof}
    Since $\widehat{G}\cong\widehat{H}$ is virtually abelian and $H\hookrightarrow \widehat{H}$ it follows that $H$ is a virtually free abelian group. Hence, by \Cref{NormalFiniteCrystallographic} we have order isomorphisms $\mathcal{CF}(G)\rightarrow\mathcal{CF}(\widehat{G})\rightarrow\mathcal{CF}(\widehat{H})\rightarrow\mathcal{CF}(H)$.  Let $\alpha$ denote the composite isomorphism and note that for any $[A]\in\mathcal{CF}(G)$ and $B\in \alpha([A])$ we have $A\cong B$.
\end{proof}

\begin{lemma}
\label{symmorphic}
Let $G$ be a crystallographic group with point group $P$. Then $G$ is symmorphic if and only if $G$ has a subgroup isomorphic to $P$.
\end{lemma}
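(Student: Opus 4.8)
The plan is to prove the two implications separately, with the forward direction being essentially definitional and the reverse direction carrying the actual content. Throughout I use that $G$ sits in the short exact sequence $1\to\Z^n\to G\to P\to 1$, in which $\Z^n$ is the translation lattice and $P$ is the (finite) point group, and that by definition $G$ is symmorphic precisely when this sequence splits. Write $\pi\colon G\to P$ for the quotient map.

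For the ``only if'' direction I would argue directly from the definition: if $G$ is symmorphic, then $\pi$ admits a section $s\colon P\to G$. Since $s$ is a splitting it is injective, so its image $s(P)$ is a subgroup of $G$ isomorphic to $P$, as required.

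The ``if'' direction is where the real work lies, although it remains short. Suppose $S\leqslant G$ is a subgroup with $S\cong P$. The key observation is that the translation lattice $\Z^n$ is torsion-free, so the finite subgroup $S$ must meet it trivially, i.e.\ $S\cap\Z^n=\{1\}$. Consequently the restriction $\pi|_S\colon S\to P$ is injective. Now $S$ and $P$ are finite groups of the same order, since $S\cong P$, so an injective homomorphism between them is forced to be an isomorphism. Hence $\pi|_S$ is an isomorphism, and its inverse $P\to S\hookrightarrow G$ is a section of $\pi$; equivalently, $S$ is a complement to $\Z^n$ in $G$. Therefore the extension splits and $G$ is symmorphic.

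I do not expect a serious obstacle here: the only points needing care are that $P$ (hence $S$) is finite, that $\Z^n$ is torsion-free so that $S\cap\Z^n=\{1\}$, and the elementary fact that an injection between finite sets of equal cardinality is a bijection. The one mildly conceptual step, and the crux of the converse, is recognising that an abstract isomorphism $S\cong P$ together with $S\cap\Z^n=\{1\}$ already upgrades $S$ to a genuine complement of $\Z^n$, thereby converting the mere existence of a subgroup isomorphic to $P$ into an actual splitting of the defining extension.
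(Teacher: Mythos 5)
Your proof is correct and follows essentially the same route as the paper's: the forward direction is immediate from injectivity of a section, and for the converse both arguments use that the finite subgroup meets the torsion-free lattice trivially, so $\pi$ restricts to an injection which is an isomorphism by the order count, whose inverse is the desired section. No gaps.
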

\begin{proof}
Let $1\hookrightarrow \mathbb{Z}^n\hookrightarrow G \overset{\pi}{\twoheadrightarrow} P\twoheadrightarrow 1$ be the short exact sequence associated to $G$. If there exists a group homomorphism $\varphi\colon P\to G$ such that $\pi\circ\varphi=id_P$, then $\varphi$ is injective and therefore $G$ has a subgroup $\varphi(P)\cong P$.

For the other direction let $H\leqslant G$ be a subgroup such that $H\cong P$. Since the kernel of $\pi$ is torsion free, the map $\pi_{|H}\colon H\to P$ is injective and therefore an isomorphism since $|H|=|P|$. We define $\phi:=\pi^{-1}_{|H}$. It is straightforward to verify that $\phi$ is a section.
\end{proof}

\begin{prop}\label{rigidity_symm_tf}
    Let $G$ be an $n$-dimensional crystallographic group with point group $P$ and $H$ be a finitely generated residually finite group. If $\widehat{G}\cong\widehat{H}$, then $H$ is an $n$-dimensional crystallographic group with point group isomorphic to $P$. Moreover,
    \begin{enumerate}
        \item $G$ is symmorphic if and only if $H$ is symmorphic.
        \item $G$ is torsion free if and only if $H$ is torsion free.
    \end{enumerate}
\end{prop}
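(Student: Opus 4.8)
The plan is to establish the structural claim first and then read off the two equivalences from the invariants already proved.

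First I would show that $H$ is virtually free abelian of rank $n$. Since $G$ is $n$-dimensional crystallographic it contains $\mathbb{Z}^n$ as a finite-index normal subgroup, and because $\mathbb{Z}^n$ is finitely generated its full profinite topology is induced from $G$; hence the extension $1 \to \mathbb{Z}^n \to G \to P \to 1$ profinitely completes to an exact sequence $1 \to \widehat{\mathbb{Z}}^n \to \widehat{G} \to P \to 1$, exhibiting $\widehat{G} \cong \widehat{H}$ as virtually $\widehat{\mathbb{Z}}^n$. Intersecting $H$ (dense in $\widehat{H}$) with the open abelian subgroup $\widehat{\mathbb{Z}}^n$ yields a finite-index subgroup of $H$ that embeds into an abelian group and is finitely generated, hence isomorphic to $\mathbb{Z}^m \times F$ with $F$ finite; its closure contains $\widehat{\mathbb{Z}}^m$ as an open subgroup of $\widehat{G}$, and since all open subgroups of $\widehat{G}$ are commensurable with $\widehat{\mathbb{Z}}^n$ a rank comparison forces $m = n$. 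Thus $H$ is virtually $\mathbb{Z}^n$.

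Next I would upgrade this to ``crystallographic'' via \Cref{CharacterisationCrystallographicGroup}, which requires ruling out nontrivial finite normal subgroups of $H$. If $N \trianglelefteq H$ were finite and nontrivial, then \Cref{FNS} makes $\iota(N)$ a nontrivial finite normal subgroup of $\widehat{H} \cong \widehat{G}$, so it suffices to show $\widehat{G}$ has none. Here I use that $\widehat{\mathbb{Z}}^n$ is torsion-free and normal: for any finite normal $M \trianglelefteq \widehat{G}$ one has $[M, \widehat{\mathbb{Z}}^n] \subseteq M \cap \widehat{\mathbb{Z}}^n = 1$, so the image of $M$ in $P \leq \GL_n(\mathbb{Z})$ acts trivially on $\widehat{\mathbb{Z}}^n \supseteq \mathbb{Z}^n$; faithfulness of the point-group action gives $M \subseteq \widehat{\mathbb{Z}}^n$, whence $M = 1$. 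Therefore $H$ is $n$-dimensional crystallographic, with some point group $P_H$. To identify $P_H \cong P$, the crux is that the translation subgroup is characteristic in the profinite completion. I would prove that $\widehat{\mathbb{Z}}^n$ is the unique maximal open normal abelian subgroup of $\widehat{G}$: it is self-centralizing (an element centralizing $\widehat{\mathbb{Z}}^n$ projects to a point-group element fixing the spanning set $\mathbb{Z}^n$, hence is trivial in $P$), and any open normal abelian $M$ acts on $\widehat{\mathbb{Z}}^n$ through $M/(M\cap \widehat{\mathbb{Z}}^n) \hookrightarrow P$ while centralizing the open, hence $\mathbb{Q}$-spanning, sublattice $M \cap \widehat{\mathbb{Z}}^n$, forcing $M \subseteq \widehat{\mathbb{Z}}^n$. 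This characterization is isomorphism-invariant, so an isomorphism $\widehat{G} \cong \widehat{H}$ carries the translation subgroup of $\widehat{G}$ onto that of $\widehat{H}$ and descends to $P = \widehat{G}/\widehat{\mathbb{Z}}^n \cong \widehat{H}/\widehat{\mathbb{Z}}^n = P_H$.

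Finally, the two equivalences follow from \Cref{LatticeProfiniteInvariant}, which gives an order isomorphism $\mathcal{CF}(G) \cong \mathcal{CF}(H)$ under which corresponding finite subgroups are isomorphic. For (2), $G$ is torsion-free precisely when $\mathcal{CF}(G) = \{[1]\}$, precisely when $\mathcal{CF}(H) = \{[1]\}$, precisely when $H$ is torsion-free. For (1), finite subgroups of a crystallographic group inject into its point group, so $|P|$ (respectively $|P_H|$) is the maximal order of a finite subgroup, and by \Cref{symmorphic} symmorphicity is equivalent to attaining it; since $|P| = |P_H|$ by the previous paragraph, $\mathcal{CF}(G)$ contains a class of order $|P|$ if and only if $\mathcal{CF}(H)$ does, giving the equivalence. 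The main obstacle is the characteristic-subgroup argument for the point group; the rank computation in the first paragraph is the other point requiring care.
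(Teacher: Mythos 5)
Your proof is correct, but it takes a genuinely different route to the structural claim than the paper does. The paper outsources that step: it cites Grunewald--Zalesskii \cite[Proposition 2.10]{GrunewaldZalesskii2011} to get that $H$ is virtually free abelian of rank $n$ with quotient isomorphic to $P$, rules out non-trivial finite normal subgroups of $\widehat{G}$ via \Cref{NormalFiniteCrystallographic} (the statement $\mathcal{CF}(G)=\mathcal{CF}(\widehat{G})$, proved there by a cohomological conjugation argument), and then applies \Cref{CharacterisationCrystallographicGroup}; the two equivalences follow from \Cref{LatticeProfiniteInvariant} and \Cref{symmorphic} essentially as you argue. You instead reprove the Grunewald--Zalesskii input from scratch: completing $1\to\Z^n\to G\to P\to 1$, intersecting the dense copy of $H$ with the open subgroup $\widehat{\Z}^n$ to get the rank, killing finite normal subgroups of $\widehat{G}$ by the commutator trick $[M,\widehat{\Z}^n]\subseteq M\cap\widehat{\Z}^n=1$ together with faithfulness of the point-group action (this replaces the paper's appeal to \Cref{NormalFiniteCrystallographic} at this step, though you still need \Cref{FNS} exactly as the paper does), and identifying the point group by showing $\widehat{\Z}^n$ is the unique maximal open normal abelian subgroup, hence carried to its counterpart by any isomorphism $\widehat{G}\cong\widehat{H}$. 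Your approach buys self-containedness, and the characteristic-subgroup characterization makes the profinite invariance of the point group conceptually transparent, at the cost of length and of several standard profinite facts used without proof. Two small imprecisions, neither fatal: the exactness of $1\to\widehat{\Z^n}\to\widehat{G}\to P\to 1$ holds because $\Z^n$ has \emph{finite index} in $G$ (every finite-index subgroup of $\Z^n$ contains its $G$-core, which is open in $G$), not because $\Z^n$ is finitely generated; and the identification of the closure of $A=H\cap\widehat{\Z}^n$ with $\widehat{A}\cong\widehat{\Z}^m$ in your rank comparison rests on the same finite-index fact applied to $A\leqslant H$, which you should say explicitly. Finally, note that your parts (1) and (2) still invoke \Cref{LatticeProfiniteInvariant}, which is built on \Cref{NormalFiniteCrystallographic}, so your argument eliminates the dependence on Grunewald--Zalesskii but not on the paper's cohomological machinery.
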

\begin{proof}
Let  $H$ be a finitely generated residually finite group with $\widehat{G}\cong\widehat{H}$.
By \cite[Proposition 2.10]{GrunewaldZalesskii2011} follows that $H$ is a virtually free abelian group of rank $n$ with quotient isomorphic to $P$. 

By \Cref{CharacterisationCrystallographicGroup} the crystallographic group $G$ does not have non-trivial finite normal subgroups, thus by \Cref{NormalFiniteCrystallographic} we know that $\widehat{G}$ does not have non-trivial finite normal subgroups. Hence, $\widehat{H}$ and therefore $H$ does not have any non-trivial finite normal subgroups either. Thus, by \Cref{CharacterisationCrystallographicGroup} we see that $H$ is an $n$-dimensional crystallographic group.

Now, \Cref{LatticeProfiniteInvariant} implies that $G$ is torsion free if and only if $H$ is torsion free. Further, $G$ has a subgroup isomorphic to $P$ if and only if $H$ has a subgroup isomorphic to $P$. Thus, by \Cref{symmorphic} we obtain that $G$ is symmorphic if and only if $H$ is symmorphic.
\end{proof}

\begin{thm}~\cite[Theorem 6]{RatcliffeTschantz2010}
\label{cryst.center}
Let $G$ be a crystallographic group. Then  $Z(G)\cong\Z^n$, where $n$ is the rank of the abelianization of $G$.
\end{thm}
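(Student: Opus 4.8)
The plan is to identify the centre $Z(G)$ with the $P$-invariants of the translation lattice and then match its rank to that of the abelianisation through a single rational representation-theoretic quantity. Write the defining sequence as $1\to T\to G\xrightarrow{\pi} P\to 1$, where $T\cong\ZZ^{d}$ is the translation lattice of $G\leqslant\Isom(\EE^{d})$ and $P$ is the finite point group (I use $d$ for the Euclidean dimension to keep the symbol $n$ for the abelianisation rank appearing in the statement). First I would record that $Z(G)$ is torsion-free: its torsion elements form a finite characteristic, hence normal, subgroup of $G$, which is trivial by \Cref{CharacterisationCrystallographicGroup}. Since $Z(G)$ is a subgroup of the finitely generated virtually abelian group $G$, it is finitely generated abelian and torsion-free, so $Z(G)\cong\ZZ^{r}$ for some $r$; the task is to compute $r$.

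The key structural input is that $T$ is self-centralising, $C_G(T)=T$, which comes from faithfulness of the holonomy representation $\rho\colon P\to\Aut(T)$. Realising $T$ as a full-rank lattice $L\subset\RR^{d}$ and an element $g\in G$ as $x\mapsto Ax+b$ with $A\in\OO(d)$, conjugation sends the translation by $v$ to the translation by $Av$; thus $\rho(\pi(g))=1$ forces $A$ to fix the spanning set $L$ pointwise, so $A=I$ and $g$ is a translation, i.e.\ $g\in T$. Consequently $Z(G)\subseteq C_G(T)=T$, and an element of $T$ is central in $G$ precisely when it is fixed by conjugation, so $Z(G)=T^{P}$. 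Because $T^{P}$ is saturated in $T\cong\ZZ^{d}$ (if $kt$ is fixed then so is $t$), it is a direct summand and hence free of rank $n:=\dim_{\QQ}(T\otimes\QQ)^{P}$.

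It remains to see $\rank(G^{ab})=n$. I would feed the extension into the five-term exact sequence in rational homology,
\[
H_2(P;\QQ)\to H_0\bigl(P;H_1(T;\QQ)\bigr)\to H_1(G;\QQ)\to H_1(P;\QQ)\to 0.
\]
Since $P$ is finite, $H_i(P;\QQ)=0$ for $i\geq 1$, so $H_1(G;\QQ)\cong H_0(P;T\otimes\QQ)=(T\otimes\QQ)_P$, the coinvariants. Finally, for a finite group acting on a $\QQ$-vector space the averaging idempotent $e=\tfrac{1}{|P|}\sum_{p\in P}p$ has image $(T\otimes\QQ)^{P}$ and kernel equal to the augmentation submodule, so the natural map $(T\otimes\QQ)^{P}\hookrightarrow T\otimes\QQ\twoheadrightarrow (T\otimes\QQ)_P$ is an isomorphism. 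Hence $\rank(G^{ab})=\dim_{\QQ}(T\otimes\QQ)_P=\dim_{\QQ}(T\otimes\QQ)^{P}=n$, and combining with the previous paragraph yields $Z(G)\cong\ZZ^{n}$.

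The main obstacle is the bookkeeping that links three a priori different ranks — that of $Z(G)$, of the invariant lattice $T^{P}$, and of $G^{ab}$ — through the single number $\dim_{\QQ}(T\otimes\QQ)^{P}$. The geometric step $C_G(T)=T$ is what pins $Z(G)$ inside $T$ and realises it as an \emph{invariant} object, while $G^{ab}$ appears naturally as a \emph{coinvariant} object; the identity ``invariants $=$ coinvariants over $\QQ$'' is exactly what forces the two ranks to coincide. The one point demanding care is that $T^{P}$ is a genuine free summand of $T$, so that its rank is literally $n$ rather than $n$ only up to finite index.
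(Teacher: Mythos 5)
Your proof is correct, but there is nothing in the paper to compare it against: the paper does not prove this statement at all, it imports it wholesale as \cite[Theorem 6]{RatcliffeTschantz2010}. So what you have produced is a genuine self-contained replacement for the citation, and all of its steps check out: the torsion subgroup of $Z(G)$ is finite and characteristic, hence trivial by \Cref{CharacterisationCrystallographicGroup}; the geometric computation $g t_v g^{-1}=t_{Av}$ does give $C_G(T)=T$ (full-rank of the lattice is exactly Bieberbach's first theorem, which is implicit in the paper's short exact sequence), whence $Z(G)=T^{P}$; saturation of $T^{P}$ in $T$ makes it a free direct summand of rank $\dim_{\QQ}(T\otimes\QQ)^{P}$; and the five-term sequence with $\QQ$-coefficients plus the averaging idempotent identifies this number with $\rank(G^{\mathrm{ab}})$. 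Two small streamlinings are available. First, your opening paragraph is redundant: once you know $Z(G)\subseteq C_G(T)=T\cong\ZZ^{d}$, torsion-freeness and finite generation of $Z(G)$ are automatic, so the appeal to \Cref{CharacterisationCrystallographicGroup} can be dropped. Second, you can bypass the coinvariants-versus-invariants bookkeeping entirely: for the normal finite-index subgroup $T\leqslant G$, the transfer gives $H_1(G;\QQ)\cong H_1(T;\QQ)^{P}=(T\otimes\QQ)^{P}$ directly, which is the same number as $\rank\, T^{P}=\rank\, Z(G)$. What your argument buys over the paper's approach is self-containedness at very low cost --- the only inputs are Bieberbach's theorem and elementary rational representation theory of finite groups --- whereas the citation keeps the exposition short; either choice is legitimate.
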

 
\begin{prop}
\label{centre}
Let $G$ be an $n$-dimensional crystallographic group.
Then $\widehat{G}$ is centreless if and only if $G$ is centreless.
\end{prop}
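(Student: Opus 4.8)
The plan is to show that for a crystallographic group the centre is exactly the sublattice of translations fixed by the point group, that the same description holds verbatim for $\widehat{G}$, and that these two fixed-point modules vanish simultaneously because one is obtained from the other by the flat base change from $\ZZ$ to $\widehat{\ZZ}$.

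Write the defining sequence as $1 \to L \to G \xrightarrow{\pi} P \to 1$, where $L \cong \ZZ^n$ is the translation lattice and $P$ the finite point group, acting faithfully on $L$ via $P \hookrightarrow \GL_n(\ZZ) = \Aut(L)$ (this faithfulness is a defining feature of crystallographic groups). First I would observe that $Z(G) = L^P$: any central $z$ commutes with $L$, so $\pi(z)$ acts trivially on $L$ and hence $\pi(z) = 1$ by faithfulness, giving $z \in L$; and conversely an element of $L$ is central precisely when it is $P$-fixed. In particular $Z(G)$ is free abelian, say of rank $m$ (consistent with \Cref{cryst.center}), and $G$ is centreless if and only if $L^P = 0$.

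Next I would pass to profinite completions. Since $L$ is finite-index in the finitely generated group $G$, its subspace topology is the full profinite topology (every finite-index subgroup of $L$ contains a subgroup that is normal and of finite index in $G$, namely its normal core), so $\overline{\iota(L)} \cong \widehat{L} \cong \widehat{\ZZ}^{\,n}$ and we obtain $1 \to \widehat{L} \to \widehat{G} \to P \to 1$. The conjugation action of $\widehat{G}$ on the abelian normal subgroup $\widehat{L}$ factors through $P$ and is the $\widehat{\ZZ}$-linear extension of the action on $L$; it remains faithful since $\GL_n(\ZZ) \hookrightarrow \GL_n(\widehat{\ZZ})$. Repeating the previous argument word for word then yields $Z(\widehat{G}) = (\widehat{L})^P$.

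Finally I would compare the two fixed-point modules. Realising $L^P$ as $\ker\bigl(\delta\colon L \to \bigoplus_{p \in P} L,\ v \mapsto (pv-v)_p\bigr)$ and using that $\widehat{\ZZ}$ is torsion-free, hence flat over the PID $\ZZ$, tensoring the left-exact sequence $0 \to L^P \to L \xrightarrow{\delta} \bigoplus_p L$ with $\widehat{\ZZ}$ gives $(\widehat{L})^P \cong L^P \otimes_{\ZZ} \widehat{\ZZ} \cong \widehat{\ZZ}^{\,m}$. Thus $(\widehat{L})^P = 0$ if and only if $m = 0$ if and only if $L^P = 0$, i.e.\ $\widehat{G}$ is centreless exactly when $G$ is. The step demanding the most care is the identification $Z(\widehat{G}) = (\widehat{L})^P$: one must check that the completed sequence really has kernel $\widehat{\ZZ}^{\,n}$ and quotient $P$, that the induced $P$-action is the base-changed one and stays faithful, and that a central element of $\widehat{G}$ is forced into $\widehat{L}$; once this is in place the flat base-change comparison is routine.
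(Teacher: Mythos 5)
Your proof is correct, and it takes a genuinely different route from the paper's. The paper treats the two directions asymmetrically: the implication that $\widehat{G}$ centreless forces $G$ centreless is just the containment $Z(G)\subseteq Z(\widehat{G})$, while the converse goes through the Ratcliffe--Tschantz theorem (\Cref{cryst.center}, $Z(G)\cong\Z^{\rank(G^{\mathrm{ab}})}$) to deduce that a trivial centre forces $\widehat{G}^{\mathrm{ab}}$ to be finite, then uses \Cref{NormalFiniteCrystallographic} to rule out torsion in $Z(\widehat{G})$, and finally derives a contradiction via Schur's transfer map, which would produce an infinite abelian quotient of $\widehat{G}$. You instead identify both centres as fixed-point modules, $Z(G)=L^P$ and $Z(\widehat{G})=(\widehat{L})^P$, using faithfulness of the point-group action (which for the completed sequence reduces to $\GL_n(\ZZ)\hookrightarrow\GL_n(\widehat{\ZZ})$), and then compare them by flat base change, $(\widehat{L})^P\cong L^P\otimes_{\ZZ}\widehat{\ZZ}$. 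Your argument is more self-contained --- it avoids Ratcliffe--Tschantz, the transfer map, and the finite-normal-subgroup proposition entirely --- and it proves a sharper statement, namely $Z(\widehat{G})\cong Z(G)\otimes_{\ZZ}\widehat{\ZZ}\cong\widehat{\ZZ}^{\,m}$, from which the equivalence is immediate. The only point to flag is your parenthetical that faithfulness of the $P$-action on $L$ is ``a defining feature'': under the paper's definition of a crystallographic group as a discrete cocompact subgroup of $\Isom(\EE^n)$, it is rather a standard consequence of Bieberbach's first theorem --- the translation lattice spans $\RR^n$, so an element whose linear part fixes $L$ pointwise is trivial in $P$ --- but this is routine and does not constitute a gap.
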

\begin{proof}
We have $Z(G)\subseteq Z(\widehat{G})$ (see \cite[Lemma 2.1]{BridsonReidSpitler2023}). Hence, if $\widehat{G}$ is centreless, then $G$ is centreless as well. 

Now, assume that $Z(G)$ is trivial. By \Cref{cryst.center} we know that $G$  has finite abelianization, thus the commutator subgroup $[G,G]$ has finite index in $G$ and therefore $\overline{[G,G]}=\widehat{[G,G]}$ has finite index in $\widehat{G}$. It follows that $\widehat{G}^{\mathrm{ab}}$ is finite.

The profinite completion $\widehat{G}$ has a normal subgroup $N$ isomorphic to $\widehat{\mathbb{Z}^n}$ such that $\widehat{G}/N\cong P$ where $P$ is the point group of $G$. Let $m=|P|$. 

Assume for a contradiction that $\widehat{G}$ has a non-trivial centre. By \Cref{NormalFiniteCrystallographic} we know that $\widehat{G}$ does not have non-trivial finite normal subgroups, hence the torsion part of the centre of $\widehat{G}$ is trivial. Thus there exists a non-trivial $n_0\in N\cap Z(\widehat{G})$. 

Now we consider the transfer map $\tr\colon\widehat{G}\to N$ defined by Schur in \cite{Schur1902} as follows: let $g_1,\ldots,g_m$ be a set of left coset representatives of $N$ in $\widehat G$. For $g\in\widehat{G}$ and $i=1,\ldots,m$, there exists $n_i\in N$ such that $gg_i=g_jn_i$ for some $g_j$. We define $\tr(g):=n_1+\ldots+n_m$. In particular we have: $\tr(n_0)=m\cdot n_0$, thus the order of $\tr(n_0)$ is infinite. Hence, $\widehat{G}$ has an infinite abelian quotient which contradicts the fact that the abelianization of $\widehat{G}$ is finite.
\end{proof}

\begin{prop}
\label{justinfinite}
Let $G$ be a virtually free abelian group of rank $n\geq 1$ with quotient $P$ and $H$ be a finitely generated residually finite group. If $\widehat{G}\cong\widehat{H}$, then $G$ is just infinite if and only if $H$ is just infinite.
\end{prop}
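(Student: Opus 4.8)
The plan is to isolate an intrinsic description of just infiniteness and then transfer its two ingredients separately across the isomorphism $\widehat{G}\cong\widehat{H}$. The key assertion I would prove first is: \emph{a finitely generated virtually free abelian group $G$ of rank $n\geq 1$ is just infinite if and only if $G$ is crystallographic and the rational lattice representation $\mathbb{Q}^n=\mathbb{Z}^n\otimes\mathbb{Q}$ is an irreducible $\mathbb{Q}[P]$-module}, where $\mathbb{Z}^n$ is the maximal free abelian normal subgroup and $P$ is the resulting quotient (the point group when $G$ is crystallographic). For the forward direction, a nontrivial finite normal subgroup $F$ would give the infinite proper quotient $G/F$, so just infiniteness forces $G$ to have no such $F$ and hence to be crystallographic by \Cref{CharacterisationCrystallographicGroup}; moreover a proper nonzero $P$-invariant subspace $V\subsetneq\mathbb{Q}^n$ would make $L:=V\cap\mathbb{Z}^n$ a nonzero $P$-invariant sublattice, hence a nontrivial normal subgroup of infinite index, again contradicting just infiniteness. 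For the converse, if $G$ is crystallographic then every nontrivial $N\trianglelefteq G$ is infinite (no finite normal subgroups) and satisfies $N\cap\mathbb{Z}^n\neq 0$, since $N/(N\cap\mathbb{Z}^n)$ embeds in the finite group $P$; irreducibility of $\mathbb{Q}^n$ then forces $(N\cap\mathbb{Z}^n)\otimes\mathbb{Q}=\mathbb{Q}^n$, so $N$ has finite index and $G$ is just infinite.

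Next I would transfer the two conditions. As in the proof of \Cref{LatticeProfiniteInvariant}, $H$ is again finitely generated virtually free abelian of rank $n$. The first condition, being crystallographic, is a profinite invariant: if $G$ has a nontrivial finite normal subgroup $F$ then $\iota(F)\trianglelefteq\widehat{G}$ is nontrivial and finite by \Cref{FNS}; conversely, a nontrivial finite normal subgroup $M\trianglelefteq\widehat{G}$ has $[M]=[\iota(A)]$ for some finite $A\leqslant G$ under the order isomorphism $\mathcal{CF}(G)\cong\mathcal{CF}(\widehat{G})$ of \Cref{NormalFiniteCrystallographic}, and writing $M=g\,\iota(A)\,g^{-1}$ shows $\iota(A)=g^{-1}Mg$ is normal in $\widehat{G}$, hence $A$ is a nontrivial finite normal subgroup of $G$. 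Thus $G$ is crystallographic exactly when $\widehat{G}$ has no nontrivial finite normal subgroup, and likewise for $H$, so $\widehat{G}\cong\widehat{H}$ gives that $G$ is crystallographic if and only if $H$ is. If neither is crystallographic, then neither is just infinite and we are done. If both are crystallographic, the second condition transfers because the point groups of $G$ and $H$ lie in the same $\mathbb{Q}$-class by Piwek--Popovic--Wilkes \cite[page~558]{PiwekPopovicWilkes2021}: a conjugacy $A P_G A^{-1}=P_H$ in $\GL_n(\mathbb{Q})$ exhibits $A$ as an isomorphism of $\mathbb{Q}[P]$-modules between the two lattice representations, so $\mathbb{Q}^n$ is irreducible for $P_G$ if and only if it is for $P_H$.

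Combining the characterisation with the invariance of the two conditions then yields that $G$ is just infinite if and only if $G$ is crystallographic with $\mathbb{Q}^n$ irreducible, if and only if $H$ is crystallographic with $\mathbb{Q}^n$ irreducible, if and only if $H$ is just infinite. The genuinely nontrivial external input is the $\mathbb{Q}$-class invariance, and accordingly the main obstacle is conceptual rather than computational: one must recognise that an abstract isomorphism of point groups is \emph{not} sufficient, since the same finite group can act on $\mathbb{Z}^n$ both $\mathbb{Q}$-reducibly and $\mathbb{Q}$-irreducibly, and that controlling irreducibility of the lattice representation requires the finer $\mathbb{Q}$-class invariant.
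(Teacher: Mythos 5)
Your proof is correct, and its skeleton ultimately coincides with the paper's: both reduce the statement to the conjunction of two conditions --- being crystallographic, and having a $\mathbb{Q}$-irreducible point representation --- and both transfer the second condition across $\widehat{G}\cong\widehat{H}$ using the $\mathbb{Q}$-class invariance of Piwek--Popovic--Wilkes \cite[page 558]{PiwekPopovicWilkes2021} together with the observation that $\mathbb{Q}$-irreducibility is preserved by conjugation in $\GL_n(\mathbb{Q})$. The difference lies in how the two ingredients are obtained, and your route is more self-contained in both. For the characterisation of just infiniteness, the paper cites Ratcliffe--Tschantz \cite[Theorem 11]{RatcliffeTschantz2010} (just infinite if and only if the point representation is $\Z$-irreducible) and then Curtis--Reiner \cite[page 497]{CurtisReiner1962} to identify $\Z$-irreducibility with $\mathbb{Q}$-irreducibility; you instead prove the characterisation directly, and by phrasing it in terms of $\mathbb{Q}$-irreducibility from the start you bypass the $\Z$-versus-$\mathbb{Q}$ comparison entirely. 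Your two-sided argument (a proper nonzero invariant subspace meets the lattice in a nontrivial normal subgroup of infinite index; conversely, any nontrivial normal subgroup of a crystallographic group meets the lattice in a nonzero invariant sublattice, which irreducibility forces to have full rank) is complete. For the transfer of crystallographicity, the paper invokes \Cref{rigidity_symm_tf}, while you re-derive the equivalence ``$G$ crystallographic $\Leftrightarrow$ $\widehat{G}$ has no nontrivial finite normal subgroup'' from \Cref{FNS} and \Cref{NormalFiniteCrystallographic}. This buys you something genuine: \Cref{rigidity_symm_tf} assumes $G$ is crystallographic, whereas the proposition as stated only assumes $G$ is virtually free abelian, so the paper's one-line appeal to it silently skips the case of a virtually free abelian $G$ with nontrivial finite normal subgroups; your explicit disposal of that case (neither group is then crystallographic, hence neither is just infinite) makes the proof match the stated generality of the proposition.
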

\begin{proof}
By \Cref{rigidity_symm_tf} we have that $H$ is a crystallographic group.  Since $\widehat{G}\cong\widehat{H}$, 
the point groups $P$ and $P'$ are in the same $\mathbb{Q}$-class by \cite[page 558]{PiwekPopovicWilkes2021}.
A result of Ratcliffe--Tschantz \cite[Theorem 11]{RatcliffeTschantz2010} shows that a crystallographic group is just infinite if and only if the corresponding representation of the point group $P\to \GL_n(\Z)$ is $\Z$-irreducible. Moreover, by \cite[page 497]{CurtisReiner1962} we have that $\Z$-irreducibility is equivalent to $\mathbb{Q}$-irreducibility. Since $\mathbb{Q}$-irreducibility is preserved by conjugation in $\GL_n(\mathbb{Q})$, it follows that $G$ is just infinite if and only if $H$ is just infinite.  
\end{proof}

\section{Proof of Theorem~\ref{IrrAffineCoxeterProfinitelyRigid}}

The following lemma follows from \cite[Proposition 17.2.1]{Davis2008}, \cite[Theorem 3.4]{ParisVarghese2024}, and \Cref{CharacterisationCrystallographicGroup}.

\begin{lemma}
\label{AffineCoxeterGroupsCrystal}
A Coxeter group $W_\Gamma$ is crystallographic if and only if every connected component of $\Gamma$ is isomorphic to one of the graphs in \Cref{affineCoxeterGraphs}.
\end{lemma}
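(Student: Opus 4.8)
The plan is to combine the irreducible decomposition of $W_\Gamma$ with the three cited inputs, treating \cite[Proposition 17.2.1]{Davis2008} and \cite[Theorem 3.4]{ParisVarghese2024} as black boxes that supply, respectively, the Euclidean geometry of the affine diagrams and the characterisation of virtually abelian Coxeter groups. First I would record the structural reduction: if $\Gamma$ has connected components $\Gamma_1,\dots,\Gamma_k$, then vertices lying in distinct components commute, so the Coxeter presentation yields a direct product decomposition $W_\Gamma\cong W_{\Gamma_1}\times\cdots\times W_{\Gamma_k}$. This both reduces the geometric content to the irreducible factors and, crucially, exhibits each $W_{\Gamma_i}$ as a direct factor and hence a normal subgroup of $W_\Gamma$.

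For the backward implication, suppose every component is one of the diagrams in \Cref{affineCoxeterGraphs}. By \cite[Proposition 17.2.1]{Davis2008} each such irreducible factor $W_{\Gamma_i}$ is of affine type and therefore acts as a discrete, cocompact reflection group on a Euclidean space $\EE^{n_i}$; in particular it is an $n_i$-dimensional crystallographic group, and each affine diagram has at least two vertices so that $n_i\geq 1$. Acting componentwise on $\EE^{n_1}\times\cdots\times\EE^{n_k}\cong\EE^{n}$ with $n=\sum_i n_i$ is again an action by isometries that is discrete and cocompact, so $W_\Gamma$ is crystallographic.

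For the forward implication, assume $W_\Gamma$ is crystallographic. Then $W_\Gamma$ is virtually free abelian, and by \Cref{CharacterisationCrystallographicGroup} it has no nontrivial finite normal subgroup. Virtual abelianness together with \cite[Theorem 3.4]{ParisVarghese2024} forces every component $\Gamma_i$ to be of spherical or affine type. It remains to exclude spherical components: if some $\Gamma_i$ were spherical, then $W_{\Gamma_i}$ would be a nontrivial finite group (any vertex contributes an involution), and being a direct factor it would be a nontrivial finite normal subgroup of $W_\Gamma$, contradicting \Cref{CharacterisationCrystallographicGroup}. Hence every component is affine, i.e.\ isomorphic to one of the graphs in \Cref{affineCoxeterGraphs}.

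The main obstacle is purely one of alignment rather than computation: I must be sure that \cite[Theorem 3.4]{ParisVarghese2024} delivers exactly the dichotomy ``$W_\Gamma$ virtually abelian $\iff$ each component spherical or affine'', and that \cite[Proposition 17.2.1]{Davis2008} delivers the discrete cocompact Euclidean action for precisely the listed affine diagrams. Once these are in hand, the only genuinely new ingredient is the direct-factor argument ruling out spherical components, which is elementary. A minor point to verify is the hypothesis $n\geq 1$ in \Cref{CharacterisationCrystallographicGroup}, which holds automatically since every affine component has crystallographic rank $n_i\geq 1$.
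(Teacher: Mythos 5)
Your proof is correct and takes essentially the same route as the paper: the paper gives no written proof, stating only that the lemma ``follows from'' \cite[Proposition 17.2.1]{Davis2008}, \cite[Theorem 3.4]{ParisVarghese2024}, and \Cref{CharacterisationCrystallographicGroup}, and your argument assembles precisely these three ingredients in the intended way (direct product over components, Euclidean action for the affine factors, and exclusion of spherical factors as finite normal subgroups). The only point to keep in mind is the alignment caveat you yourself raise about the exact statement of \cite[Theorem 3.4]{ParisVarghese2024}, which is indeed the virtually abelian dichotomy you need.
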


\begin{proof}[Proof of \Cref{IrrAffineCoxeterProfinitelyRigid}]
We first prove the result for the irreducible crystallographic Coxeter groups. 
 Let $\atX_n$ be one of the graphs in \Cref{affineCoxeterGraphs} and let $W=W_{\atX_n}$. Then $W\cong Q(\aX_n^\vee)\rtimes W_{\aX_n}$, where $Q(\aX_n^\vee)\cong\Z^n$ is the corresponding coroot lattice and $W_{\aX_n}$ is the corresponding finite Coxeter group. We denote by $Q(\aX_n)$ the corresponding root lattice and by $P(\aX_n)$ the weight lattice.  See \cite[pages 81 and 118]{Kane2001} for the definitions of these lattices and for the Coxeter graphs of type $\aX_n$.
Note that by \Cref{AffineCoxeterGroupsCrystal} the Coxeter group $W$ is an $n$-dimensional crystallographic group. 

Let $G$ be a finitely generated residually finite group such that $\widehat{W}\cong\widehat{G}$. If $n\leq 4$, then $W\cong G$ by \Cref{4Dim}.  
Now we assume that $n\geq 5$.
By \Cref{prop.invariants} it follows that $G$ is an $n$-dimensional symmorphic crystallographic group whose point group is in the same $\mathbb{Q}$-class as $W_{\aX_n}$ and $W^{\mathrm{ab}}\cong G^{\mathrm{ab}}$.  We consider two cases:

\textbf{Case 1:}  \emph{Assume that $\atX_n$ is not of type $\atB_n$ or $\atC_n$.}

Since $n\geq 5$ the Coxeter graph $\atX_n$ is of type $\atA_n, \atD_n, \atE_6, \atE_7$ or $\atE_8$. Thus the corresponding root lattice $Q(\aX_n)$ is equal to the coroot lattice $Q(\aX_n^\vee)$, see \cite[pages 102--105]{Kane2001}. Hence $W\cong Q(\aX_n)\rtimes W_{\aX_n}$.

By \cite[Theorem 1]{Feit1998}, there exists a $W_{\aX_n}$ invariant lattice $L$ such that $G\cong L\rtimes W_{\aX_n}$ and $Q(\aX_n)\subseteq L\subseteq P(\aX_n)$. Thus $W$ is a normal subgroup of $G$ of index $|L/Q(\aX_n)|$.  Note that $W^{\mathrm{ab}}\cong W_{\aX_n}^{\mathrm{ab}}\cong \Z_2 $ by \cite[Propositions 2.2 and 2.3]{MollerVarghese2023}. Since $G^{\mathrm{ab}}\cong W^{ab}\cong \Z_2 $, it follows that $L/Q(\aX_n)$ is 
trivial or is isomorphic to $\Z_2 $.

The lattice $Q(\aX_n)$ is a normal subgroup of $G$, thus $G/Q(\aX_n)\cong L/Q(\aX_n)\rtimes W_{\aX_n}$. Since $|L/Q(\aX_n)|\leq 2$, the semidirect product is indeed a direct product. Thus 
$G/Q(\aX_n)\cong L/Q(\aX_n)\times W_{\aX_n}\twoheadrightarrow L/Q(\aX_n)\times W^{\mathrm{ab}}_{\aX_n}\cong L/Q(\aX_n)\times \Z_2 .$ 
Since $G^{\mathrm{ab}}\cong W^{\mathrm{ab}}\cong \Z_2 $ we conclude that $L/Q(\aX_n)$ is trivial and therefore $G\cong W$.

\textbf{Case 2:} \emph{Assume that $\atX_n$ is of type $\atB_n$ or $\atC_n$.}

First we note that $W_{\aB_n}=W_{\aC_n}$. Further, the irreducible affine Coxeter group $W_{\atX_n}$  does not have a quotient isomorphic to $(\Z_2^2)\rtimes W_{\aB_n}$ or $\Z_4\rtimes W_{\aB_n}$ by \cite[Proposition 7.2]{Maxwell1998}.

By \cite[Theorem 1]{Feit1998}, there exist $W_{\aB_n}$ invariant lattices $L_1\subseteq L_2\subseteq L_3$ such that $|L_{i}/L_{i-1}|=2$ for $i=2,3$ and $W_{\atB_n}\cong L_l\rtimes W_{\aB_n}$, $W_{\atC_n}\cong L_k\rtimes W_{\aB_n}$ and $G\cong L_m\rtimes W_{\aB_n}$ for $k,l,m\in \left\{1,2,3\right\}$. Moreover, $L_3/L_1\cong\Z_4$ if $n$ is odd and $L_3/L_1\cong \Z_2^2 $ if $n$ is even. Thus the group $L_3\rtimes W_{\aB_n}$ has a quotient isomorphic to $\Z_2^2\rtimes W_{\aB_n}$ if $n$ is odd and  $\Z_4\rtimes W_{\aB_n}$ if $n$ is even, namely $(L_3\rtimes W_{\aB_n})/L_{1}$.

Further, the group $L_2\rtimes W_{\aB_n}$ has a quotient isomorphic to $\Z_2^3$. More precisely:  the abelianization of the point group $W_{\aB_n}$ is $\Z_2^2$. Hence, $L_2\rtimes W_{\aB_n}\twoheadrightarrow (L_2\rtimes W_{\aB_n})/L_1\cong L_2/L_1\rtimes W_{\aB_n}\twoheadrightarrow \Z_2 \times W_{\aB_n}^{\mathrm{ab}}\cong \Z_2^3$.

Note that the abelianization of $W_{\atB_n}$ is isomorphic to $\Z_2^2$.   Thus $W_{\atB_n}\cong L_1\rtimes W_{\aB_n}$. 

Since $W_{\atC_n}$  does not have a quotient isomorphic to $\Z_2^2\rtimes W_{\aB_n}$ or $\Z_4\rtimes W_{\aB_n}$ we know that $W_{\atC_n}\cong L_2\rtimes W_{\aB_n}$. Thus the groups $W_{\atB_n}\cong L_1\rtimes W_{\aB_n}$ and $W_{\atC_n}\cong L_2\rtimes W_{\aB_n}$ can be distinguished from $L_3\rtimes W_{\aB_n}$ by their finite quotients. 

Further, the abelianization of $W_{\atC_n}$ is $\Z_2^3$, thus the group $W_{\atB_n}$ can be distinguished from  $W_{\atC_n}$ by the abelianisation.  Finally, we obtain $G\cong W_{\atX_n}$.

It remains to deal with the case of a non-trivial direct product.   Let $W_{\Gamma_1},\ldots, W_{\Gamma_n}$ be irreducible affine Coxeter groups. Assume that $\widehat{W_{\Gamma_1}}\times\ldots\times\widehat{W_{\Gamma_n}}\cong\widehat{G}$. By \Cref{prop.invariants}, $G$ is a symmorphic crystallogrphic group. We may decompose $G$ as a direct product of directly indecomposable groups $G_1,\ldots, G_m$, thus $G\cong G_1\times\ldots\times G_m$ and each $G_i$ is a symmorphic crystallographic group for $i=1,\ldots, m$. Now, applying \cite[Proposition 2.17 (2)]{GrunewaldZalesskii2011} we obtain $n=m$ and that there exists $\sigma\in \sym(m)$ such that $\widehat{W_{\Gamma_i}}\cong \widehat{G_{\sigma(i)}}$. Since irreducible affine Coxeter groups are profinitely rigid we obtain $W_{\Gamma_i}\cong G_{\sigma(i)}$. Thus $W_{\Gamma_1}\times\ldots\times W_{\Gamma_m}\cong G$.
\end{proof}

\bibliographystyle{halpha}
\bibliography{refs.bib}
\end{document}